\theoremstyle{theorem}
\newtheorem{theorem}{Theorem}[section]
\newtheorem{proposition}[theorem]{Proposition}
\newtheorem{lemma}[theorem]{Lemma}
\newtheorem{corollary}[theorem]{Corollary}
\theoremstyle{definition}
\newtheorem{definition}[theorem]{Definition}
\newtheorem{remark}[theorem]{Remark}
\def\im{\mathop{\rm Im}\nolimits}
\newcommand{\m}{\mathfrak{m}}
\def\im{\mathop{{\mathrm{im}}}}
\def\ker{\mathop{{\mathrm{ker}}}}
\def\Ann{{{\mathrm{Ann}}}}
\def\ldiv{\mathbin{{\!\smallsetminus\!}}}
\def\rdiv{\mathbin{{\!\reflectbox{$\smallsetminus$}\!}}}
\begin{document}
	\title{Local loop near-rings}
	\author{Damir Franeti\v{c}}
	\address{Univerza v Ljubljani, Fakulteta za ra\v cunalni\v stvo in informatiko, Ve\v{c}na pot 113, 1000 Ljubljana, Slovenia}
	\email{damir.franetic@fri.uni-lj.si}
	\begin{abstract}
		We study loop near-rings, a generalization of near-rings, where the additive structure is not necessarily associative. 
		We introduce local loop near-rings and prove a useful detection principle for localness.
	\end{abstract}
	\keywords{algebraic loop, loop near-ring, quasiregular element, local loop near-ring}
	\subjclass[2010]{16Y30, 20N05}

\maketitle

\section*{Introduction}
	This paper evolved from a number of algebraic results that proved to be useful in the study of decompositions of H- and coH-spaces by the 
	author and his advisor in~\cite{FraPav} and~\cite{FraPav2}. Generalizing the notion of localness from rings to loop near-rings, we 
	were able to prove powerful uniqueness-of-decompostion results for H- and coH-spaces, which are analogous to the classical 
	Krull--Schmidt--Remak--Azumaya theorem for modules.
	
	A near-ring is a generalization of the notion of a ring, where one does not assume the addition to be commutative, and only one 
	distributivity law holds. This is a well-studied algebraic structure, see~\cite{Pilz},~\cite{Meldrum},~\cite{Clay}. Loop near-rings were introduced 
	in~\cite{Ramakotaiah} as a generalization of near-rings. In a loop near-ring $N$ one does not even require the addition to be 
	associative, instead $N$ is only assumed to be an algebraic loop under addition. To justify the study of such an obscure algebraic 
	structure, we note that homotopy endomorphisms of connected H-spaces are examples of genuine loop near-rings, which are often not 
	near-rings~\cite[Examples 1.4 and 1.5]{FraPav2}. 
	
	The paper is divided into three sections. In Section~\ref{sect:intro} we recall the definitions of loops, loop near-rings, their 
	modules, and module homomorphisms. Relevant substructures are then defined naturally as kernels and images of those homomorphisms. There 
	are no new results, we do, however, reprove several known facts in a novel and concise manner. In Section~\ref{sect:jac} two 
	(of the several) possible generalizations of the Jacobson radical to loop near-rings are defined. We introduce quasiregular elements 
	and show that in certain important cases both Jacobson radical-like objects coincide with the largest quasiregular ideal. Finally, 
	in Section~\ref{sect:local}, local loop near-rings are introduced, and it is shown that many known properties of local rings also 
	hold in the loop near-ring setting. 
\section{Loops and loop near-rings}
\label{sect:intro}
	A loop is a generalization of the notion of a group. Associativity requirement is dropped from the definition, but one still 
	requires the existence of an identity element and replaces the existence of inverses by existence of unique solutions to 
	certain equations. A loop near-ring is a generalization of a ring. Two requirements are omitted from the definition of a ring: 
	commutativity and associativity of addition, and right or left distributivity. Nevertheless, a surprising amount of 
	common concepts and theorems from ring theory generalizes to this setting. Loop near-rings were first introduced by 
	Ramakotaiah in~\cite{Ramakotaiah}. We recall the definitions and state relevant results.
	
	\begin{definition}
		An algebraic structure $(G,+)$, where $+$ denotes a binary operation on the set $G$, is a {\em quasigroup} if, for all
		$a,b \in G$, the equations $a+x=b$ and $y+a=b$ have unique solutions $x, y \in G$. If a quasigroup $(G,+)$ has a 
		two-sided zero, i.e. an element $0 \in G$ such that $0+a=a+0=a$ for all $a \in G$, we call $G$ a {\em loop}.
	\end{definition}
	Every group is a loop, and a loop is essentially a `non-associative group'. Existence of unique solutions to the two 
	equations implies that left and right cancellation laws hold in a loop. The unique solution 
	of the equation $a+x=b$ will be denoted by $x = a \ldiv b$, and the unique solution of the equation $y+a=b$ by $y= b \rdiv a$.  
	The operations $\ldiv$ and $\rdiv$ are called the {\em left} and the {\em right difference} respectively.
	
	There are two kinds of substructures that will interest us. A subset $I$ of a loop $H$ is called a {\em subloop} if it is closed under the operations 
	$+$, $\ldiv\,$, and $\rdiv$ on $H$. 
	Notation $I \le H$ will stand for `$I$ is a subloop of $H$'. The definition of a normal subloop is more complicated due to lack of associativity. Given a 
	loop $G$ a subloop $K \le G$ is a {\em normal subloop} if for all $a, b \in G$ we have
	\[
		a+K = K+a \textrm{, }(a+b)+K = a+(b+K) \textrm{ and } (K+a)+b = K+(a+b) \textrm{. }
	\]
	Notation $K \unlhd G$ will stand for `$K$ is a normal subloop of $G$'. Whenever $K$ is a normal subloop of $G$, the quotient $G/K$ admits a natural 
	loop structure determined by $(a+K)+(b+K) := (a+b)+K$.
	
	In the present paper we prefer to characterize substructures naturally (in the sense of category theory). A map of loops 
	$\phi \colon G \to H$ is a {\em loop homomorphism} if $\phi(a+b) = \phi(a)+\phi(b)$ holds for all $a, b \in G$. Since $\phi(0) + \phi(0) = \phi(0)$, 
	cancellation in $H$ gives $\phi(0)=0$. Similarly; $\phi(a \ldiv b) = \phi(a) \ldiv \phi(b)$ and $\phi(a \rdiv b) = \phi(a) \rdiv \phi(b)$. 
	The {\em category of loops} has loops as objects and loop homomorphisms as morphisms. It is a category with a zero object, namely the trivial loop $0$ 
	consisting of the zero element only. Hence, there is the {\em zero homomorphism} $0 \colon G \to H$ between any two loops $G$ and $H$ mapping every 
	element of $G$ to $0 \in H$. The {\em kernel} of a loop homomorphism $\phi \colon G \to H$ is the preimage of $0 \in H$, i.e. $\ker \phi = \phi^{-1}(0)$.
	This $\ker \phi$ is the equalizer of $\phi$ and $0\colon G \to H$, so $\ker \phi$ is in fact a category-theoretic kernel. The {\em image} of a loop 
	homomorphism is the set $\im \phi = \phi(G)$. Observe that normal subloops are precisely kernels, while subloops are precisely images. Specifically, 
	$K \unlhd G$ if and only if $K$ is the kernel of some loop homomorphism, and $I \le H$ if and only if $I$ is the image of some loop homomorphism. This kind of 
	characterization of substructures will be used as the defining property later in this paper. It has the advantage of avoiding (often complicated) 
	element-by-element defining conditions, and streamlines many proofs. 
	See~\cite[Chapter IV]{Bruck} for a detailed treatment of loops, their homomorphisms, and corresponding substructures.
	
	Recall that $(S,\cdot)$ is a {\em semigroup} if the binary operation $\cdot$ on $S$ is associative.
	\begin{definition}
		A {\em loop near-ring} $N$ is an algebraic structure $(N,+,\cdot)$ such that:
		\begin{itemize}
			\item $(N,+)$ is a loop, 
			\item $(N, \cdot)$ is semigroup, 
		\end{itemize}
		and multiplication $\cdot$ is either left or right distributive over addition $+$. If we have: 
		\begin{itemize}
			\item $m(n_1+n_1) = mn_1 + mn_2$ for all $m, n_1, n_2 \in N$, we call $N$ a {\em left} loop near-ring, 
			\item $(m_1+m_2)n = m_1n+m_2n$ for all $m_1, m_2, n \in N$, we call $N$ a {\em right} loop near-ring.
		\end{itemize} 
		If $(N, +)$ is a group, $(N,+, \cdot)$ is a {\em near-ring}.
	\end{definition}
	A common example of a right near-ring is the near-ring of {\em all} functions $f \colon G \to G$ of a group 
	$(G, +)$, commonly denoted by $M(G)$. When $G$ is merely a loop, $M(G)$ is a loop near-ring~\cite[Example 1.2]{Ramakotaiah}. 
	Restricting to only those functions $f \colon G \to G$ for which $f(0) = 0$ we obtain $M_0(G)$ -- the {\em zero-symmetric} 
	part of $M(G)$. 
		
	We will restrict our discussion to right loop near-rings. 
	Right distributivity in $N$ implies that right multiplication  
	$\phi_n \colon N \to N$, $m \mapsto mn$, by $n$ is an endomorphism of the loop $(N,+)$, and it follows that 
	$(m_1 \ldiv m_2)n = m_1 n \ldiv m_2 n$, $(m_1 \rdiv m_2)n = m_1 n \rdiv m_2 n$, and $0n = 0$ for all $m_1, m_2, n \in N$.
	
	Note that $n0 \neq 0$ in general. However, for arbitrary $n \in N$, the solution $y$ of the equation $n = y + n0$ does 
	satisfy $y0=0$, since $n0 = (y+n0)0 = y0+n0$. Therefore $N = N_0 + N_c$, where 
	$N_0 = \{ y \in N : y0 = 0\}$, $N_c = N0 = \{ n0 : n \in N\}$, and $N_0 \cap N_c = 0$.
	We call $N_0$ {\em the zero-symmetric part} and $N_c$ {\em the constant part} of $N$, respectively. Also, a loop 
	near-ring $N$ will be called {\em zero-symmetric} if $N = N_0$, i.e. $n0 = 0$ holds for all $n \in N$.

	A loop near-ring $N$ is {\em unital} if there is an element $1 \in N$ (called {\em the identity}), such that $1n = n1 = n$ 
	for all $n \in N$. An element $u \in N$ in a unital loop near-ring $N$ is called a {\em unit} (or {\em invertible}) if there is a 
	$u^{-1} \in N$ ({\em the inverse} of $u$), such that $u u^{-1} = u^{-1} u = 1$. The group of all units of $N$ 
	will be denoted by $U(N)$. A loop near-ring $N$ is a {\em loop near-field} if $U(N) = N \setminus\{ 0\}$.
	\begin{definition}
		A loop $G$ is a {\em left module} over a (right) loop near-ring $N$ if there is an action
		\[
			N \times G \to G \textrm{, } (n,a) \mapsto na
		\]
		such that $m(na) = (mn)a$, and $(m+n)a = ma+na$ hold for all $a \in G$ and $m,n \in N$.  If $N$ is unital, we also 
		require the action to be unital, i.e. $1a = a$ for all $a \in G$. To emphasize that $G$ is a left $N$-module, we will often 
		write $\tensor*[_N]{G}{}$.
		
		A loop $G$ is a {\em right module} over a (right) loop near-ring $M$ if there is an action 
		\[
			G \times M \to G \textrm{, } (a, m) \mapsto am
		\]
		such that $a(nm) = (an)m$, and $(a+b)n = an + bn$ hold for all $a,b \in G$ and $m,n \in M$. If $1 \in M$, we also 
		require $a1=a$ for all $a \in G$. We will denote right $M$-modules by $G_M$. 
		
		A loop $G$ is a {\em $(N,M)$-bimodule} if $G$ is both a left $N$-module and a right $M$-module, and $(na)m = n(am)$ holds for 
		all $a \in G$, $n \in N$, $m \in M$. Notation: $\tensor*[_N]{G}{_M}$.
		\label{def:modules}
	\end{definition}
	Every loop near-ring $N$ is an $(N,N)$-bimodule with the action defined by loop near-ring multiplication. 
	Also, we view every left $N$-module $G$ as an $(N,0)$-bimodule, where $0$ is the trivial loop near-ring, and every right $M$-module 
	$G$ as a $(0,M)$-bimodule. Both trivial actions are defined by $a0=0$ and $0a = 0$ for all $a \in G$. Therefore, every loop $G$ is, 
	trivially, a $(0,0)$-bimodule. 
	
	In the next definition we define substructures in an unconventional, but natural way.
	\begin{definition}
		For left $N$-modules $G$ and $H$, we call a map $\phi \colon G \to H$ a {\em homomorphism of left $N$-modules} if 
		$\phi(a+b) = \phi(a) + \phi(b)$ and  $\phi(na) = n \,\phi(a)$ holds for all $a, b \in G$ and all $n\in N$. 
		{\em Homomorphisms of right $M$-modules} are defined analogously. A map is a {\em homomorphism of $(N,M)$-bimodules} if it is a 
		left $N$-module and a right $M$-module homomorphism simultaneously.
		
		If $K = \ker \phi \subseteq G$ is the kernel of a homomorphism of left $N$-modules $\phi \colon G \to H$, we call $K$ a 
		{\em left $N$-submodule} and write $K \tensor*[_N]{\unlhd}{} G$. If $I = \im \phi \subseteq H$ is the image of a homomorphism 
		$\phi$, we call $I$ a {\em left $N$-subloop} and write $I \tensor*[_N]{\le}{} H$. We define
		{\em right $M$-submodules} $K \unlhd_M G$ and {\em right $M$-subloops} $I \le_M H$ analogously, i.e. as kernels and images of 
		right $M$-module homomorphisms. It should be clear, what 
		is meant by {\em $(N,M)$-submodule}, $K \tensor*[_N]{\unlhd}{_M} G$, and {\em $(N,M)$-subloop}, $I \tensor*[_N]{\le}{_M} H$. 
		
		A subset $J \subseteq N$ in a loop near-ring $N$ is a {\em left ideal} if $J$ is a left $N$-submodule in $\tensor*[_N]{N}{}$, 
		a {\em right ideal} if $J$ is a right $N$-submodule in $N_N$, and an {\em ideal} if $J$ is an $(N,N)$-submodule in 
		$\tensor*[_N]{N}{_N}$. 
		\label{def:substructures}
	\end{definition}
	Note that left $N$-subloops are left $N$-modules on their own right, while left $N$-submodules are {\em not} 
	left $N$-modules unless $N$ is zero-symmetric. For if $n0 \neq 0$ for some $n \in N$ then $n 0 \notin \ker \phi$ since 
	$\phi(n 0) = n \, \phi(0) = n 0$. Here $\phi \colon G \to H$ is a left $N$-module homomorphism and $0$ denotes the zero in $N$, 
	$G$, or $H$ as required. Right structures exhibit nicer behavior: right $M$-submodules and right $M$-subloops {\em are} right 
	$M$-modules.
	\begin{remark}
		A word of caution regarding naming conventions. In the near-ring setting Pilz~\cite{Pilz} calls our left $N$-modules $N$-groups, our 
		left $N$-submodules are called ideals, while our left $N$-subloops are (for our convenience) renamed as $N$-subgroups. On the other 
		hand, Meldrum~\cite{Meldrum} and Clay~\cite{Clay} use the same name as we do for left $N$-modules, while our left $N$-subgroups 
		are called $N$-submodules, and our left $N$-submodules are called ($N$-)ideals. It seems that right structures have not yet been extensively 
		studied, but Clay~\cite[Definition 13.2]{Clay} does define them and calls our right $M$-modules $M$-comodules. Admittedly, our naming 
		convention is a little confusing in view of the fact described above. To our defense, let us just say that the confusion disappears if $N$ is 
		zero-symmetric.
	\end{remark} 
	\begin{remark}
		A map $\phi \colon N \to M$ is a {\em homomorphism of loop near-rings} if $\phi(n_1 + n_2) = \phi(n_1)+\phi(n_2)$ 
		and $\phi(n_1 n_2) = \phi(n_1)\phi(n_2)$ holds for all $n_1, n_2 \in N$. If $N$ and $M$ are unital, we add the requirement 
		$\phi(1) = 1$. Kernels of such homomorphisms are ideals in the sense of Definition~\ref{def:substructures}, since $M$ can be viewed as an 
		$(N,N)$-bimodule with the two actions defined by $n \cdot m := \phi(n)\, m$ and $m \cdot n := m \, \phi(n)$ for $n \in N$, 
		$m \in M$.
	\end{remark}
	Since $\phi(0) = 0$ for any loop homomorphism $\phi \colon G \to H$, we can view $\phi$ as a homomorphism of $(0,0)$-bimodules. 
	Hence, a normal subloop $K \unlhd G$ is the same as a 
	$(0,0)$-submodule $K \tensor*[_0]{\unlhd}{_0} G$, and a subloop $I \le H$ is the same as a $(0,0)$-subloop 
	$I \tensor*[_0]{\le}{_0}H$. Also: $K \unlhd_N G$ $\Leftrightarrow$  $K \tensor*[_0]{\unlhd}{_N} G$, 
	$I \le_N H$ $\Leftrightarrow$ $I \tensor*[_0]{\le}{_N} H$, $K \tensor*[_M]{\unlhd}{} G$ $\Leftrightarrow$ 
	$K \tensor*[_M]{\unlhd}{_0} G$, and $I \tensor*[_M]{\le}{} H$ $\Leftrightarrow$ $I \tensor*[_M]{\le}{_0} H$.

	Our definition of substructures is of little use when one wants to do element-by-element computations. In the next proposition we translate 
	Definition~\ref{def:substructures} into conventional element-wise defining conditions. The proof is a routine exercise, so we omit it.
	\begin{proposition}
		Let $N$ and $M$ be loop near-rings and $G = \tensor*[_N]{G}{_M}$ an $(N,M)$-bimodule. The following assertions hold:
		\begin{enumerate}[(a)]
			\item $K \subseteq G$ is a left $N$-submodule if and only if $K$ is a normal subloop in $(G,+)$ and
				$n(a+k) + K = na + K$ holds for all $k \in K$, $a \in G$ and $n \in N$. 
			\item $K \subseteq G$ is a right $M$-submodule if and only if $K$ is a normal subloop in $(G,+)$ and $MK \subseteq K$.
			\item $I \subseteq G$ is a left $N$-subloop if and only if $I$ is a subloop in $(G,+)$ and $NI \subseteq I$.
			\item $I \subseteq G$ is a right $M$-subloop if anf only if $I$ is a subloop in $(G,+)$ and $IM \subseteq I$.
			\item If $N$ is zero-symmetric, then every left $N$-submodule in $G$ is also an $N$-subloop.
		\end{enumerate}
		\label{prop:substructures}
	\end{proposition}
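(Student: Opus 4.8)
The plan is to treat each of the five claims as an equivalence, proving the forward direction ("substructure $\Rightarrow$ element-wise condition") by pushing elements through the defining homomorphism, and the backward direction ("element-wise condition $\Rightarrow$ substructure") by \emph{constructing} a suitable homomorphism. The preceding discussion supplies the two structural facts I will lean on: normal subloops are exactly kernels of loop homomorphisms, and subloops are exactly images; the quotient $G/K$ by a normal subloop carries a loop structure, and the fibres of a loop homomorphism are precisely the cosets of its kernel. I will also use repeatedly that $0_N a = 0_G = 0_G m$ (from additivity in the near-ring, resp. module, argument) and that every module homomorphism sends $0$ to $0$.

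For the submodule cases (a) and (b): if $K = \ker\phi$ for a left (resp. right) module homomorphism $\phi$, then $K$ is a normal subloop as a loop-homomorphism kernel, and the stated closure condition falls out by evaluating $\phi$. In (a), for $k \in K$ one computes $\phi(n(a+k)) = n(\phi(a)+\phi(k)) = n\,\phi(a) = \phi(na)$, and since fibres are cosets this says exactly $n(a+k)+K = na+K$. In (b), $\phi(km) = \phi(k)m = 0\cdot m = 0$ shows $km \in K$, i.e. the absorption $MK \subseteq K$. Conversely, starting from the element-wise hypotheses I would pass to the loop $G/K$ (available since $K$ is normal) and define an action by $n(a+K) := na+K$ in case (a) and $(a+K)m := am+K$ in case (b); the module axioms transfer coordinatewise from $G$, and the projection $G \to G/K$ is then a module homomorphism with kernel exactly $K$, exhibiting $K$ as a submodule.

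The single nontrivial point — and where I expect the real content, together with the explanation of the asymmetry between (a) and (b), to sit — is well-definedness of these quotient actions. If $a+K = a'+K$, write $a' = a+k$ with $k \in K$. For the right action in (b), additivity in the module slot gives $a'm = (a+k)m = am + km$, and since $km \in K$ and $K$ is normal, $a'm + K = am + (km + K) = am + K$; this needs only $MK\subseteq K$, which is why the condition in (b) is the clean absorption. For the left action in (a) there is no distributivity over the module argument (a left action is additive only in the $N$-slot), so $n(a+k)$ cannot be expanded, and one genuinely requires the hypothesis $n(a+k)+K = na+K$ — which is precisely the well-definedness requirement. This is the crux, and it accounts for the more elaborate condition attached to (a).

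The subloop cases (c) and (d) are dual and lighter: if $I = \im\phi$ then $I$ is a subloop as an image, and $n\,\phi(a) = \phi(na) \in I$ (resp. $\phi(a)m = \phi(am)\in I$) yields $NI\subseteq I$ (resp. $IM\subseteq I$); conversely the absorption condition says the ambient action restricts to the subloop $I$, making $I$ a module and the inclusion $I \hookrightarrow G$ a module homomorphism with image $I$. Finally, for (e) I would first record that zero-symmetry of $N$ forces $n0_G = n(0_N 0_G) = (n0_N)0_G = 0_N 0_G = 0_G$ for every $n$, using $0_N 0_G = 0_G$. A left submodule $K$ is already a normal subloop, hence a subloop, so only $NK\subseteq K$ remains; specializing the condition from (a) to $a = 0_G$ gives $nk + K = n0_G + K = 0_G + K = K$, i.e. $nk \in K$, which is exactly the missing absorption.
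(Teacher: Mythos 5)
Your proof is correct; the paper omits this proof as a ``routine exercise,'' and your argument (reading off the element-wise conditions from the defining homomorphism in one direction, and constructing the quotient module $G/K$ or restricting the action to $I$ in the other) is exactly the intended one, with the well-definedness of the quotient action correctly identified as the source of the asymmetric condition in (a) versus the clean absorption in (b). One small remark: the condition in (b) is printed as $MK \subseteq K$ both in the statement and in your write-up, but since $M$ acts on the right it should read $KM \subseteq K$ --- your actual computation, $\phi(km)=\phi(k)m=0$ hence $km \in K$, is the correct content.
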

	Absence of left distributivity is the reason for the lack of symmetry between the element-wise characterizations of left $N$-submodules and right 
	$M$-submodules in Proposition~\ref{prop:substructures}. Over {\em right} loop near-rings {\em left} modules will play a pivotal role in 
	the radical theory. Over {\em left} loop near-rings the roles of left and right modules are reversed.
	
	For an $(N,M)$-submodule $K \tensor*[_N]{\unlhd}{_M} G$ the loop $G/K$ admits a natural $(N,M)$-bimodule structure with the two 
	actions defined by $n(a+K) := na + K$ and $(a+K)m := am + K$. Also, for an ideal $J \unlhd N$, the quotient $N/J$ becomes a loop 
	near-ring with multiplication defined by $(n+J)(m+J) := nm + J$. Again, we deliberately omit both verifications.
	
	If $\phi \colon G \to H$ is a homomorphism of $(N,M)$-bimodules $G$ and $H$, then the preimage $\phi^{-1}(K)$ of an $(N,M)$-submodule 
	$K \tensor*[_N]{\unlhd}{_M} H$ is a $(N,M)$-submodule in $G$, since $\phi^{-1}(K)$ is the kernel of the composition  
	$G \to H \twoheadrightarrow H/K$. Similarly, the preimage $\phi^{-1}(I)$ of an $(N,M)$-subloop  
	$I \tensor*[_N]{\le}{_M} H$, is an $(N,M)$-subloop in $G$. (This is a routine verification using 
	Proposition~\ref{prop:substructures}.)  
	 
	We obtain the following `correspondence theorem'.
	\begin{proposition}
		\label{prop:correspondence}
		Let $\phi \colon G \to H$ be a homomorphism of $(N,M)$-bimodules. Then $\phi$ induces an isomorphism 
		$\bar{\phi} \colon G/\ker\phi \to \im \phi$, which determines a bijective correspondence between $(N,M)$-subloops or 
		$(N,M)$-submodules in $\im \phi$ and those $(N,M)$-subloops or $(N,M)$-submodules in $G$, which contain $\ker \phi$.
	\end{proposition}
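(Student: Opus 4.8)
The plan is to split the argument into the first isomorphism theorem for $\bar\phi$ and a correspondence statement that I would reduce to the case of a quotient projection.

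First I would set $\bar\phi(a + \ker\phi) := \phi(a)$ and check that this is a well-defined isomorphism of $(N,M)$-bimodules. The one computation that carries weight is the identification of the fibers of $\phi$ with the cosets of $\ker\phi$: if $\phi(a) = \phi(b)$, then $\phi(a \rdiv b) = \phi(a) \rdiv \phi(b) = 0$, so $k := a \rdiv b \in \ker\phi$, and since $k + b = a$ we obtain $a \in \ker\phi + b = b + \ker\phi$, the last equality holding because $\ker\phi$ is normal. This shows $\bar\phi$ to be simultaneously well-defined and injective; surjectivity onto $\im\phi$ is immediate, and $\bar\phi$ respects $+$ and both actions because the quotient operations were defined precisely so that the projection $\pi \colon G \to G/\ker\phi$ is a bimodule homomorphism. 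Hence $\bar\phi$ is an isomorphism.

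For the correspondence I would first observe that, being an isomorphism of bimodules, $\bar\phi$ preserves kernels and images and therefore carries the $(N,M)$-submodules and $(N,M)$-subloops of $G/\ker\phi$ bijectively onto those of $\im\phi$. This reduces the whole claim to the projection $\pi \colon G \to G/K$, $K := \ker\phi$, where I must show that $L \mapsto \pi(L) = L/K$ is a bijection from the substructures of $G$ containing $K$ onto the substructures of $G/K$, with inverse $L' \mapsto \pi^{-1}(L')$. The preimage $\pi^{-1}(L')$ is again a substructure of the same type by the preimage observation recorded just before the statement (applied to the bimodule homomorphism $\pi$), and it contains $K = \pi^{-1}(0)$. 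In the other direction $\pi(L) = L/K$ is a substructure of $G/K$: if $L$ is a subloop it is the image of the composite $L \hookrightarrow G \xrightarrow{\pi} G/K$, while if $L$ is a submodule then $G \to G/L$ annihilates $K$, hence factors as $G \xrightarrow{\pi} G/K \to G/L$, and $L/K$ is the kernel of the second arrow.

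Finally I would verify that these two assignments are mutually inverse. The identity $\pi(\pi^{-1}(L')) = L'$ is pure surjectivity of $\pi$, while $\pi^{-1}(\pi(L)) = L$ uses that $L$ is saturated, i.e. a union of $K$-cosets: if $\pi(a) = \pi(\ell)$ with $\ell \in L$, then by the fiber-equals-coset description from the first step $a \in \ell + K \subseteq L$, the inclusion holding since $K \subseteq L$ and $L$ is closed under $+$. I expect the only genuine obstacle to be that first step, namely pinning down that the fibers of $\phi$ are exactly the normal cosets of $\ker\phi$; this single fact underlies both the injectivity of $\bar\phi$ and the saturation used at the end. Everything else is formal, resting on the categorical definition of substructures as kernels and images, with Proposition~\ref{prop:substructures} available whenever I prefer to confirm a submodule condition such as $n(x + L/K) + L/K = nx + L/K$ element-wise.
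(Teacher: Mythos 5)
Your proof is correct, and it is essentially the argument the paper intends: the paper states this proposition without proof, deriving it from the immediately preceding observation that preimages of $(N,M)$-submodules and $(N,M)$-subloops under a bimodule homomorphism are again substructures of the same kind, together with the quotient constructions already set up. Your write-up simply fills in the omitted details (the fiber-equals-coset identification via $a \rdiv b \in \ker\phi$, the factorization $G \to G/K \to G/L$, and saturation), all using exactly the tools the paper develops before the statement.
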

	\begin{proposition}
		\label{prop:K+I}
		Let $G$ be an $(N,M)$-bimodule, $K$ an $(N,M)$-submodule in $G$, and $I$ an $(N,M)$-subloop in $G$. Then $K+I = I+K$ and 
		$K+I$ is an $(N,M)$-subloop in $G$. 
	\end{proposition}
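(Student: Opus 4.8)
The plan is to avoid checking the subloop closure conditions for $K+I$ element by element (which is delicate because of non-associativity) and instead realize $K+I$ as the preimage of a subloop under a quotient map, so that the preimage observation stated just before Proposition~\ref{prop:correspondence} applies. Concretely, since $K \tensor*[_N]{\unlhd}{_M} G$, the quotient $G/K$ carries a natural $(N,M)$-bimodule structure and the canonical projection $\pi \colon G \to G/K$, $g \mapsto g+K$, is a homomorphism of $(N,M)$-bimodules. Because $I \tensor*[_N]{\le}{_M} G$ is a subloop, it is itself an $(N,M)$-bimodule (with the actions inherited from $G$), and the inclusion $\iota \colon I \hookrightarrow G$ is a bimodule homomorphism; hence so is the composite $\pi \circ \iota \colon I \to G/K$, whose image $\pi(I) = \im(\pi \circ \iota)$ is an $(N,M)$-subloop of $G/K$.

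For the equality $K+I = I+K$, I would use only the normality of $K$. Since a submodule is in particular a normal subloop of $(G,+)$, we have $i+K = K+i$ for every $i \in I$. Taking the union over all $i \in I$ gives $I+K = \bigcup_{i \in I}(i+K) = \bigcup_{i \in I}(K+i) = K+I$. Note that this is the one place where associativity is genuinely unavailable, so the identity must be read off coset by coset rather than by rearranging a sum $k+i$ into $i+k$.

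It remains to identify $K+I$ with a preimage. I claim $\pi^{-1}(\pi(I)) = I+K$. If $\pi(g) = \pi(i)$ for some $i \in I$, then applying $\pi$ to $i \ldiv g$ yields $\pi(i \ldiv g) = \pi(i) \ldiv \pi(g) = 0$, so $i \ldiv g \in \ker \pi = K$, i.e. $g = i + (i \ldiv g) \in i+K$; conversely every element of $i+K$ lies in the fiber over $\pi(i)$. Thus the fiber of $\pi$ over $\pi(i)$ is exactly $i+K$, and $\pi^{-1}(\pi(I)) = \bigcup_{i \in I}(i+K) = I+K = K+I$. By the preimage observation, the preimage of the $(N,M)$-subloop $\pi(I)$ under the bimodule homomorphism $\pi$ is an $(N,M)$-subloop of $G$, so $K+I$ is an $(N,M)$-subloop. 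The main obstacle is the fiber identity $\pi^{-1}(\pi(i)) = i+K$: in a loop it must be argued through the difference operation $\ldiv$ and the fact that $\pi$ preserves it, since one cannot simply cancel; once this is in hand, the rest is formal.
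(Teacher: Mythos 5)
Your proof is correct and follows essentially the same strategy as the paper's: both identify $K+I$ (equivalently $I+K$) with the preimage $\phi^{-1}(\phi(I))$ of the $(N,M)$-subloop $\phi(I)$ under a bimodule homomorphism with kernel $K$ (you take the canonical quotient $\pi\colon G \to G/K$, the paper an arbitrary such $\phi$), and then invoke the fact that preimages of $(N,M)$-subloops are $(N,M)$-subloops. Your coset-by-coset use of normality to get $K+I=I+K$ is a minor, equally valid variation on the paper's ``completely analogous'' computation.
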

	\begin{proof}
		Write $K = \ker \phi$ for some homomorphism $\phi \colon G \to H$. For all $k \in K$ and $i \in I$ we have 
		$\phi(k+i) = \phi(k) + \phi(i) = \phi(i) \in \phi(I)$, hence $K+I \subseteq \phi^{-1}(\phi(I))$. Also, for any 
		$a \in \phi^{-1}(\phi(I))$ there is an $i \in I$, such that $\phi(i) = \phi(a)$. If $y$ is the solution of 
		the equation $y + i = a$, then $\phi(y) = 0$, hence $y \in K$, and it follows that $a \in K + I$. 
		Conclusion: $K + I = \phi^{-1}(\phi(I))$. The proof that $I + K$ equals $\phi^{-1}(\phi(I))$ is completely analogous. 
		Note that any preimage of an $(N,M)$-subloop is an $(N,M)$-subloop by Proposition~\ref{prop:substructures}, so 
		$\phi^{-1}(\phi(I))$ is an $(N,M)$-subloop in $G$.		
	\end{proof}
	\begin{lemma}
		\label{lem:intersub}
		The intersection of an arbitrary family of $(N,M)$-submodules or $(N,M)$-subloops in an $(N,M)$-bimodule $G$ is an 
		$(N,M)$-submodule or an $(N,M)$-subloop, respectively. 
	\end{lemma}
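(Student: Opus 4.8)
The plan is to handle the two assertions separately, using the element-wise characterizations of Proposition~\ref{prop:substructures} for the subloop case and the description of submodules as kernels for the submodule case.

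For the subloop case, let $\{I_j\}_{j \in \mathcal{J}}$ be a family of $(N,M)$-subloops of $G$ and set $I = \bigcap_j I_j$. By Proposition~\ref{prop:substructures}(c),(d) each $I_j$ is a subloop of $(G,+)$ satisfying $N I_j \subseteq I_j$ and $I_j M \subseteq I_j$. First I would check that $I$ is a subloop of $(G,+)$: if $a,b \in I$ then $a,b \in I_j$ for every $j$, so $a+b$, $a \ldiv b$, and $a \rdiv b$ all lie in each $I_j$, hence in $I$. The action conditions pass to the intersection just as easily, since for $n \in N$ and $a \in I$ we have $na \in I_j$ for every $j$, so $na \in I$, and likewise $am \in I$ for $m \in M$. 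Thus $N I \subseteq I$ and $I M \subseteq I$, and Proposition~\ref{prop:substructures}(c),(d) identifies $I$ as an $(N,M)$-subloop.

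For the submodule case, I would exploit that $(N,M)$-submodules are precisely kernels of bimodule homomorphisms. Let $\{K_j\}_{j \in \mathcal{J}}$ be a family of $(N,M)$-submodules and write $K_j = \ker \phi_j$ for bimodule homomorphisms $\phi_j \colon G \to H_j$. The key step is to assemble these into a single homomorphism detecting the intersection. Form the product loop $H = \prod_{j} H_j$ with componentwise operations; the componentwise actions $n \cdot (x_j)_j = (n x_j)_j$ and $(x_j)_j \cdot m = (x_j m)_j$ make $H$ into an $(N,M)$-bimodule. Then the map $\phi \colon G \to H$ given by $\phi(a) = (\phi_j(a))_j$ is a bimodule homomorphism, being a homomorphism in each coordinate. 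Finally $\phi(a) = 0$ holds if and only if $\phi_j(a) = 0$ for every $j$, so $\ker \phi = \bigcap_j \ker \phi_j = \bigcap_j K_j$. Hence the intersection is the kernel of a bimodule homomorphism, i.e. an $(N,M)$-submodule.

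The only point requiring genuine care is verifying that the arbitrary product $H = \prod_j H_j$ really is an $(N,M)$-bimodule and that $\phi$ respects this structure; this is a routine componentwise check of the loop axioms (unique solvability of $a+x=b$ and $y+a=b$ holds coordinatewise, with solutions computed via $\ldiv$ and $\rdiv$ in each factor) and of the module identities of Definition~\ref{def:modules}, so I would not spell it out in full. An alternative, purely element-wise route for submodules would run through Proposition~\ref{prop:substructures}(a),(b), but it would require separately proving that an arbitrary intersection of normal subloops is normal and that the coset condition $n(a+k)+K = na+K$ survives intersection; the kernel-of-a-product argument sidesteps both of these and fits the categorical spirit of the paper, so I expect it to be the path of least resistance.
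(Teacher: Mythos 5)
Your proof is correct. The submodule half is essentially the paper's own argument: the paper also realizes $\bigcap_i K_i$ as the kernel of the map $a \mapsto (\phi_i(a))_i$ into the product $\prod_i H_i$ (written there as the composite of the diagonal $\Delta\colon G \to \prod_i (G)_i$ with $\prod_i \phi_i$). Where you diverge is the subloop half: you verify closure under $+$, $\ldiv$, $\rdiv$ and the conditions $NI \subseteq I$, $IM \subseteq I$ directly via Proposition~\ref{prop:substructures}(c),(d), whereas the paper stays in its categorical idiom and writes $\bigcap_k I_k = \Delta^{-1}\bigl(\im \prod_k \psi_k\bigr)$, then invokes the previously stated fact that preimages of $(N,M)$-subloops under bimodule homomorphisms are $(N,M)$-subloops. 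The two routes are close in substance --- the paper's preimage fact is itself justified by a ``routine verification using Proposition~\ref{prop:substructures}'' --- so your version is the same argument with one layer of abstraction unwound; it is slightly more self-contained, while the paper's version reuses its machinery and keeps the two halves of the lemma formally parallel (kernel of a product map versus preimage of an image of a product map). Your implicit reliance on the product of $(N,M)$-bimodules being an $(N,M)$-bimodule is shared by the paper and is indeed a routine coordinatewise check, as you say.
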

	\begin{proof}
		Let $K_i = \ker(\phi_i \colon G \to H_i)$ be $(N,M)$-submodules in $G$. Denote by 
		$\Delta \colon G \to \prod_i (G)_i$ the diagonal, and by 
		$\prod_i \phi_i \colon \prod_i (G)_i \to \prod_i H_i$ the product homomorphism. Then 
		$\bigcap_i K_i = \ker \phi$, where $\phi$ is the composite
		\[
			\phi \colon G \xrightarrow{\Delta} \prod_i (G)_i \xrightarrow{\prod_i \phi_i} \prod_i H_i
			\textrm{, } \ a \mapsto \left(\phi_i(a)\right)_i \textrm{, }
		\]
		hence $\bigcap_i K_i$ is an $(N,M)$-bimodule. 
		
		For $(N,M)$-subloops $I_k = \im(\psi_k \colon F_k \to G)$ in the $(N,M)$-bimodule $G$ we have 
		$\bigcap_k I_k = \Delta^{-1}(\im \prod_k \psi_k)$. 	
	\end{proof}
	\begin{definition}
		Let $G$ be a left $N$-module. For subsets $A,B \subseteq G$ define
		\[
			(A:B) = \tensor*[_N]{(A:B)}{} := \{ n \in N : nB \subseteq A \} \textrm{. }
		\]
		For singletons $A = \{a \}$ or $B = \{b \}$ we will write $(a:B)$ or $(A:b)$, respectively. The {\em annihilator} of $B$ in 
		$N$ is $\Ann(B) := {(0:B)} = \{ n \in N : nB = 0\}$.
	\end{definition}
	\begin{lemma}
		Let $G$ be a left $N$-module, and $B \subseteq G$ an arbitrary subset. 
		If $A$ is a subloop (normal subloop, left $N$-subloop, left $N$-submodule) in $G$, then $(A:B)$ is a 
		subloop (normal subloop, left $N$-subloop, left ideal) in $\tensor*[_N]{N}{}$.
	\end{lemma}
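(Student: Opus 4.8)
The plan is to realize $(A:B)$ as an intersection of preimages of $A$ and then invoke the two facts already in place: that preimages of substructures under bimodule homomorphisms are again substructures, and that arbitrary intersections of substructures are again substructures (Lemma~\ref{lem:intersub}). The only real work is to exhibit the correct homomorphism.

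First I would introduce, for each fixed $b \in B$, the evaluation map $\ev_b \colon N \to G$, $n \mapsto nb$. The module axiom $(m+n)b = mb + nb$ says precisely that $\ev_b$ preserves addition, so $\ev_b$ is a loop homomorphism; viewing it as a homomorphism of $(0,0)$-bimodules is all that the subloop and normal subloop cases require. The remaining module axiom $(mn)b = m(nb)$ reads $\ev_b(mn) = m\,\ev_b(n)$, which upgrades $\ev_b$ to a homomorphism of left $N$-modules, i.e. of $(N,0)$-bimodules $\tensor*[_N]{N}{} \to \tensor*[_N]{G}{}$; this is what the $N$-subloop and left-ideal cases need. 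Next I would record the set-theoretic identity
\[
	(A:B) = \{ n \in N : nb \in A \textrm{ for all } b \in B \} = \bigcap_{b \in B} \ev_b^{-1}(A) \textrm{. }
\]

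It then remains to run this through the four cases, reading each substructure in its bimodule guise: a subloop as a $(0,0)$-subloop, a normal subloop as a $(0,0)$-submodule, a left $N$-subloop as an $(N,0)$-subloop, and a left $N$-submodule as an $(N,0)$-submodule. Since preimages of $(N,M)$-submodules (resp. $(N,M)$-subloops) under bimodule homomorphisms are again submodules (resp. subloops), each $\ev_b^{-1}(A)$ inherits the type of $A$; and since Lemma~\ref{lem:intersub} preserves these types under intersection, so does $(A:B)$. Recalling finally that an $(N,0)$-submodule of $\tensor*[_N]{N}{}$ is exactly a left ideal gives the last of the four asserted conclusions.

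There is essentially no computation here, so the ``hard part'' is only a point of care rather than a genuine obstacle: one must check that $\ev_b$ is an honest left $N$-module homomorphism and not merely additive. This is exactly where the associativity axiom $(mn)b = m(nb)$ of the action is used, and it is precisely this step that strengthens the conclusion from ``subloop/normal subloop'' to ``$N$-subloop/left ideal'' in the last two cases.
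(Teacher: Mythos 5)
Your proof is correct and follows essentially the same route as the paper: both realize $(A:B)$ as $\bigcap_{b\in B}\phi_b^{-1}(A)$ for the evaluation homomorphisms $\phi_b\colon N\to G$, $n\mapsto nb$, and then combine the fact that preimages preserve substructure type with Lemma~\ref{lem:intersub} for intersections. Your extra care in verifying that $\ev_b$ is a genuine left $N$-module homomorphism and in sorting the four cases by their bimodule guises is just a more explicit rendering of what the paper leaves implicit.
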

	\begin{proof}
		Note that $(A: \bigcup_i B_i) = \bigcap_i (A : B_i)$. For $b \in B$ we have the left $N$-module 
		homomorphism $\phi_b \colon N \to G$, $n \mapsto nb$, and $(A:b) = \phi_b^{-1}(A)$ holds. Now $(A:B) = \bigcap_{b\in B} (A:b)$, 
		and Proposition~\ref{prop:correspondence} and Lemma~\ref{lem:intersub} imply that $(A:B)$ in $N$ is a substructure 
		of the same kind as $A$ is in $G$.  
	\end{proof}
	It is easy to check that $(K:G)$ is an ideal in $N$ for any left $N$-submodule $K \tensor*[_N]{\unlhd}{} G$, and we obtain the following 
	corollary. 
	\begin{corollary}
		\label{cor:AnnG}
		For any left $N$-module $G$ and any $a \in G$, $\Ann(a)$ is a left ideal in $N$, and $\Ann(G)$ is a (two-sided) ideal in $N$.
	\end{corollary}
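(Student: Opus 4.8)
The plan is to deduce both assertions from the lemma immediately preceding the corollary, specialized to the trivial submodule $A = \{0\}$. First I would note that $\{0\}$ is a left $N$-submodule of $G$, since it is the kernel of the identity homomorphism $\mathrm{id}_G \colon G \to G$, which is a left $N$-module homomorphism. For the first assertion I then apply that lemma with $A = 0$ and $B = \{a\}$: since $A$ is a left $N$-submodule, the lemma gives that $(0:a) = \Ann(a)$ is a left ideal in $\tensor*[_N]{N}{}$. This part requires no further work.

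For the second assertion I apply the same lemma with $B = G$, obtaining that $\Ann(G) = (0:G)$ is a left ideal; in particular $\Ann(G)$ is a normal subloop of $(N,+)$. It then remains to upgrade this to a two-sided ideal by verifying the right $N$-submodule condition of Proposition~\ref{prop:substructures}(b), i.e. that $\Ann(G)$ is a normal subloop (already established) closed under right multiplication, $\Ann(G)\,N \subseteq \Ann(G)$. For $j \in \Ann(G)$, $n \in N$ and $g \in G$, the left-module axiom $m(na)=(mn)a$ gives $(jn)g = j(ng)$, and since $ng \in G$ while $j$ annihilates all of $G$ we get $j(ng) \in jG = 0$. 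Hence $(jn)g = 0$ for every $g$, so $jn \in \Ann(G)$. Being both a left and a right $N$-submodule, $\Ann(G)$ is an $(N,N)$-submodule of $\tensor*[_N]{N}{_N}$: the quotient $N/\Ann(G)$ inherits a bimodule structure for which the projection is a bimodule homomorphism, and its kernel is exactly $\Ann(G)$.

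The step carrying the real content is the right-closure $\Ann(G)\,N \subseteq \Ann(G)$, and the main point to be careful about is that one must \emph{not} attempt to prove two-sidedness symmetrically. The left multiplicative containment $N\,\Ann(G) \subseteq \Ann(G)$ generally fails, because for $j \in \Ann(G)$ and $n \in N$ one computes $(nj)g = n(jg) = n0$, which need not vanish when $N$ is not zero-symmetric. This asymmetry is precisely why left-idealness is obtained through the normality-type submodule condition (via the lemma and preimages) rather than through a multiplicative inclusion, whereas right-idealness is a genuine closure under multiplication that survives because associativity of the action absorbs $n$ into the annihilated argument $ng$.
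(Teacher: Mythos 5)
Your proof is correct and follows essentially the same route as the paper, which obtains the corollary by applying the preceding lemma with $A=0$ together with the unproved remark that $(K:G)$ is a two-sided ideal for any left $N$-submodule $K\tensor*[_N]{\unlhd}{}G$. You merely supply the ``easy to check'' details the paper omits --- the right-closure $\Ann(G)\,N\subseteq\Ann(G)$ and the passage from ``left and right $N$-submodule'' to ``$(N,N)$-submodule'' --- and both are done correctly.
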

\section{Jacobson radicals, quasiregularity, and local homomorphisms}	
	\label{sect:jac}
	The Jacobson radical $J(R)$ of a ring $R$ is defined as the intersection of all maximal left ideals in $R$ or annihilators of all simple 
	left $R$-modules. If $R$ is unital, then $J(R)$ is also characterized as the largest quasiregular ideal in $R$.
	There are several possible generalizations of simplicity to left modules over (right) loop near-rings $N$, each of which comes with its 
	corresponding `Jacobson radical'. Of course, all of these coincide when $N$ is a ring. We recall two of them below, which will 
	suffice for our purposes. In order to have a well-behaved $J$-radical theory, we restrict our attention to unital, zero-symmetric 
	loop near-rings $N$.
	
	A left, right or two-sided ideal $K \unlhd N$ is {\em maximal} if $K \neq N$ and there is no ideal of the same kind between $K$ and $N$, 
	i.e. for any ideal $L \unlhd N$ of the same kind the containments $K \subseteq L \subseteq N$ imply either $L = K$ or $L = N$. Maximal 
	left $N$-subloops are defined analogously. 
	A left ideal $K \tensor*[_N]{\unlhd}{} N$ will be called {\em $N$-maximal} if $K$ is 
	a maximal left $N$-subloop. 
	
	Define
	\begin{align*}
		J_2(N) &:= \bigcap \{ K : K \tensor*[_N]{\unlhd}{} N \textrm{ is $N$-maximal} \} \textrm{, }
	\intertext{and}
		R(N) &:= \bigcap \{ I : I \tensor*[_N]{\le}{} N \textrm{ is maximal} \} \textrm{,  }
	\end{align*}
	i.e. $J_2(N)$ is the intersection of all $N$-maximal left ideals, and $R(N)$ is the intersection of all maximal left $N$-subloops. 
	If $N$ is a ring, both definitions coincide with the definition of the Jacobson radical of $N$, hence $J(N) = J_2(N) = R(N)$ 
	for a ring $N$.
	
	For an $N$-maximal left ideal $K \tensor*[_N]{\unlhd}{} N$ the quotient $G:=N/K$ is a nontrivial left $N$-module, which contains no 
	proper nontrivial left $N$-subloops, i.e. $0$ and $G$ are the only left $N$-subloops in $G$ (Proposition~\ref{prop:correspondence}). 
	We will call such left $N$-modules {\em $N$-simple} (as in~\cite[Definition 1.36]{Pilz}). 
	Every $N$-simple left $N$-module $G$ (over unital $N$) is generated by any nonzero $a \in G$, since $Na$ is 
	a nontrivial left $N$-subloop, hence $Na = G$. Also, for an $N$-simple $G$, the kernel of the left $N$-module homomorphism 
	$\phi_a \colon N \to G$, $n \mapsto na$ is an $N$-maximal left ideal, whenever $a \in G$ is nonzero. In fact, every 
	$N$-maximal left ideal arises in this way, and, since 
	\(
		\Ann(G) = \bigcap_{a \in G} \Ann(a) = \bigcap_{a \in G} \ker \phi_a \textrm{, }
	\)
	we can write
	\[
		J_2(N) = \bigcap \{ \Ann(G) : G \textrm{ an $N$-simple left $N$-module}\} \textrm{. }
	\]
	While not as direct as our original definition, this shows that $J_2(N)$ is a two-sided ideal by Corollary~\ref{cor:AnnG} and 
	Lemma~\ref{lem:intersub}. We clearly have 
	$R(N) \subseteq J_2(N)$. It follows from Zorn's lemma (and the fact that $N$ is unital), that every proper left $N$-subloop is contained 
	in a maximal one, hence $R(N)$ is always a proper left $N$-subloop in $N$. On the other hand, $N$ may not have any 
	$N$-maximal left ideals, in which case $J_2(N) = N$. 
	\begin{remark}
		The reader may be wondering, why not simply define a left $N$-module $G$ to be {\em simple} if it contains no proper 
		nontrivial left $N$-submodules. A valid point, with an additional complication. As it turns out, simplicity of $G$ is not enough, 
		one has to require that $G$ is also {\em monogenic}, i.e. there is an $a \in G$ such that $Na = G$. (While $N$-simple left 
		$N$-modules are automatically monogenic, simple left $N$-modules are not.) Then one defines
		\[
			J_0(N) := \bigcap \{ \Ann(G) : G \textrm{ a simple, monogenic left $N$-module}\} \textrm{. }
		\]
		This radical too has an `internal' description
		\[
			J_0(N) = \bigcap \{ (K:N) : K \tensor*[_N]{\unlhd}{} N \textrm{ is maximal} \} \textrm{, }
		\]
		since we can write $G \cong N/K$ for some maximal left ideal $K \tensor*[_N]{\unlhd}{} N$ and 
		$\Ann(G) = \Ann(N/K) = (0:N/K) = (K:N)$ holds. Unlike $J_2(N)$, $J_0(N)$ is different from the intersection 
		of all maximal left ideals, which is often denoted by
		\[
			D(N) := \bigcap \{ K : K \tensor*[_N]{\unlhd}{} N \textrm{ is maximal} \} \textrm{. }
		\]
	\end{remark}
	Moreover, most authors of near-ring literature call simple, monogenic left $N$-modules modules of {\em type 0}, and 
	$N$-simple left $N$-modules modules of {\em type 2}~\cite[Definition 3.5]{Pilz}, \cite[Definition 3.4]{Meldrum}. Of course, there are 
	also left modules of {\em type 1} with their corresponding $J_1(N)$. All three radicals are different for general $N$. 
	(The equality $J_1(N)=J_2(N)$ holds for unital $N$ though.) See~\cite{Pilz} and~\cite{Ramakotaiah} for a precise treatment of 
	those radical-like ideals and left $N$-subloops for a near-ring $N$. We will remain focused on $R(N)$ and $J_2(N)$.
	\begin{definition}
		An element $q \in N$ is {\em quasiregular} if $y = 1 \rdiv q$, the solution of $y + q = 1$, has a left inverse in 
		$N$, i.e. there exists an element $y^\lambda \in N$, such that $y^\lambda y = 1$. A subset $Q \subseteq N$ is {\em quasiregular} 
		if all of its elements are quasiregular.
	\end{definition}
	When $N$ is a near-ring, i.e. $(N,+)$ is a group, our definition of a quasiregular element coincides with~\cite[Definition 1]{Beidleman}, 
	but it is different from~\cite[Definition 5.19]{Maxson}. Quasiregularity in the sense of~\cite{Maxson} for loop near-rings was considered 
	in~\cite{Ramakotaiah}. We note however that~\cite[Definition 4.1]{Ramakotaiah} seems a bit unnatural in the loop near-ring setting, as it 
	considers the left invertibility of $1 + (0\rdiv q)$, which is different from $1\rdiv q$ if $(N,+)$ is a proper loop.
	\begin{remark}
		Assume that an idempotent $e \in N$ is quasiregular. Let $y$ be the solution of the equation $y + e = 1$. Multiplying this 
		equation by $e$ from the right and using right distributivity we obtain $ye + e = e$ or $ye = 0$. Hence 
		$e = y^\lambda y e  = y^\lambda 0 = 0$, i.e. $0$ is the unique quasiregular idempotent. 
		\label{rem:qreg_idem_0}
	\end{remark}
	\begin{lemma}
		\label{lem:qreg_in_R}
		The intersection of all maximal left $N$-subloops $R(N)$ is a quasiregular left $N$-subloop and every quasiregular left ideal 
		$Q \tensor*[_N]{\unlhd}{} N$ is contained in $R(N)$. In particular $D(N) \subseteq R(N)$.
	\end{lemma}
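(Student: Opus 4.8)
The plan is to prove the three claims in sequence: that $R(N)$ is quasiregular, that every quasiregular left ideal lies in $R(N)$, and then to deduce $D(N)\subseteq R(N)$. The workhorse is the observation that for any $y\in N$ the right-multiplication map $\rho_y\colon N\to N$, $z\mapsto zy$, is an endomorphism of the left $N$-module $\tensor*[_N]{N}{}$: right distributivity gives $\rho_y(z_1+z_2)=\rho_y(z_1)+\rho_y(z_2)$ and associativity of $\cdot$ gives $\rho_y(nz)=n\,\rho_y(z)$. Hence its image $Ny$ is a left $N$-subloop, and $y$ has a left inverse exactly when $1\in Ny$, i.e. when $Ny=N$.

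For the first claim, $R(N)$ is a left $N$-subloop by Lemma~\ref{lem:intersub}. Take $q\in R(N)$ and put $y=1\rdiv q$. If $y$ had no left inverse, then $Ny$ would be a proper left $N$-subloop, so by Zorn's lemma (using that $N$ is unital) it would sit inside some maximal left $N$-subloop $M$. But $y=1\cdot y\in Ny\subseteq M$ and $q\in R(N)\subseteq M$, so $1=y+q\in M$, contradicting $M\neq N$. Thus every $q\in R(N)$ is quasiregular.

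For the second claim, let $Q\tensor*[_N]{\unlhd}{}N$ be quasiregular and let $M$ be an arbitrary maximal left $N$-subloop. If $Q\not\subseteq M$, then by Proposition~\ref{prop:K+I} the sum $Q+M=M+Q$ is a left $N$-subloop properly containing $M$, hence equals $N$ by maximality; write $1=m+q$ with $m\in M$ and $q\in Q$. Since $m=1\rdiv q$ and $q$ is quasiregular, $m$ has a left inverse, so $1\in Nm\subseteq M$ --- again impossible. Therefore $Q\subseteq M$ for every maximal left $N$-subloop, i.e. $Q\subseteq R(N)$.

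The inclusion $D(N)\subseteq R(N)$ is where I expect the real difficulty. Since $D(N)$ is a left ideal (Lemma~\ref{lem:intersub}), by the second claim it suffices to show $D(N)$ is quasiregular, and this is the main obstacle. The naive transcription of the ring proof breaks down: for $q\in D(N)$ and $y=1\rdiv q$, the fact that $q$ lies in every maximal left ideal $K$ together with $y+q=1$ forces $y\equiv 1$ modulo each $K$, so $y$ lies in no maximal left ideal and the left ideal it generates is all of $N$; yet that left ideal is in general strictly larger than $Ny$, so one cannot conclude $Ny=N$. To get leverage I would use that every $N$-maximal left ideal is a maximal left ideal, whence $D(N)\subseteq J_2(N)$; then $q$ annihilates every $N$-simple module, and from $(y+q)a=ya+qa$ one obtains $ya=a$ on every $N$-simple $G$, i.e. $y$ acts as the identity there. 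The crux --- and the step I am least sure closes cleanly --- is to convert this identity action of $y$ (a statement about left multiplication) into left-invertibility of $y$ (a statement about the image $Ny$ of right multiplication), bridging the left/right asymmetry that the absence of left distributivity creates.
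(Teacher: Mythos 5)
Your proofs of the first two assertions are correct and essentially identical to the paper's own proof: the same argument (Zorn's lemma applied to the proper left $N$-subloop $Ny$, then $1=y+q\in I$ gives a contradiction) establishes quasiregularity of $R(N)$, and the same argument (Proposition~\ref{prop:K+I} plus maximality forcing $I+Q=N$, then $1=i^\lambda i\in Ni\subseteq I$) shows that every quasiregular left ideal lies in $R(N)$.

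Regarding the final inclusion $D(N)\subseteq R(N)$: you are right to flag it, and you should know that the paper does not prove it either --- its proof stops after the two claims you established, and the inclusion is appended as an ``in particular''. In view of the second claim, that phrasing amounts to asserting that the left ideal $D(N)$ (a left ideal by Lemma~\ref{lem:intersub}) is quasiregular in the paper's sense, and nothing proved up to that point delivers this. Your diagnosis of the obstruction is exactly right: for $q\in D(N)$ and $y=1\rdiv q$, membership of $q$ in every maximal left ideal only shows that no proper left ideal contains $y$, i.e.\ that the left ideal generated by $y$ is all of $N$; since that left ideal may be strictly larger than the left $N$-subloop $Ny$, left invertibility of $y$ does not follow. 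The underlying problem is that a maximal left $N$-subloop need not be a left ideal, nor contain any maximal left ideal: in $M_0(\Z/3)$, for example, the maps $f$ with $f(1)=f(2)$ form a maximal left $N$-subloop that is not a left ideal and contains neither of the two maximal left ideals (there $D(N)=0$, so the lemma is not contradicted, but every obvious proof strategy is blocked). Had the paper used Maxson-style quasiregularity --- where one only asks that a suitable left ideal attached to $q$ be all of $N$ --- the ``in particular'' would be immediate; with the Beidleman-style definition adopted here it is not, and your alternative route through $J_2(N)$ hits the same wall you describe: $y$ acting as the identity on every $N$-simple module yields $Ny+K=N$ for every $N$-maximal left ideal $K$, which is strictly weaker than $Ny=N$. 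So your proposal proves exactly what the paper's proof proves, by the same method; the residual claim is a genuine gap, but it is a gap in the paper's own exposition rather than a defect of your approach relative to it.
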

	\begin{proof}
		Pick an $r \in R(N)$, and let $y$ be the solution of the equation $y + r = 1$. 
		Suppose $Ny \neq N$. Then there is a maximal left $N$-subloop $I \tensor*[_N]{\lneq}{} N$ containing the left $N$-subloop $Ny$. 
		Now $r \in R(N) \subseteq I$ implies $1 = y + r \in I$, which is a contradiction. Hence $Ny = N$. In particular $y^\lambda y = 1$ 
		for some $y^\lambda \in N$.
		
		For the second statement, assume that $Q \nsubseteq R(N)$. Then there is a maximal left $N$-subloop $I \tensor*[_N]{\lneq}{} N$, 
		such that $Q \nsubseteq I$, which implies $I + Q = N$, as $I+Q$ is a left $N$-subloop by Proposition~\ref{prop:K+I}. 
		In particular $i + q = 1$ for some $i \in I$ and $q \in Q$. Hence, $i$ has a left inverse $i^\lambda \in N$, which 
		implies $1 = i^\lambda i \in Ni \subseteq I$, a clear contradiction. 		
	\end{proof}
	\begin{corollary}
		Every quasiregular left ideal in $N$ is contained in $J_2(N)$. The radical $J_2(N)$ is quasiregular if and only if 
		$J_2(N) = R(N)$, and $J_2(N)$ is the largest quasiregular ideal in this case.
	\end{corollary}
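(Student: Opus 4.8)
The plan is to read off all three assertions from Lemma~\ref{lem:qreg_in_R}, using only the already-established inclusion $R(N) \subseteq J_2(N)$ and the fact, noted earlier in this section, that $J_2(N)$ is a two-sided ideal. There is essentially no computation to do; everything is a packaging of the lemma.

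First I would dispatch the opening sentence. If $Q \tensor*[_N]{\unlhd}{} N$ is a quasiregular left ideal, then the second part of Lemma~\ref{lem:qreg_in_R} gives $Q \subseteq R(N)$, and chaining this with $R(N) \subseteq J_2(N)$ yields $Q \subseteq J_2(N)$. Nothing further is required.

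For the equivalence I would argue the two implications separately. If $J_2(N) = R(N)$, then $J_2(N)$ is quasiregular simply because $R(N)$ is a quasiregular left $N$-subloop by Lemma~\ref{lem:qreg_in_R}. Conversely, suppose $J_2(N)$ is quasiregular. Since $J_2(N)$ is a two-sided ideal, it is in particular a quasiregular left ideal, so the second half of Lemma~\ref{lem:qreg_in_R} forces $J_2(N) \subseteq R(N)$; combined with the standing inclusion $R(N) \subseteq J_2(N)$ this gives $J_2(N) = R(N)$.

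Finally, assuming $J_2(N) = R(N)$, I would check maximality among quasiregular ideals. By the previous paragraph $J_2(N)$ is itself a quasiregular ideal, and any quasiregular ideal $Q$ is \emph{a fortiori} a quasiregular left ideal, so the first assertion places $Q \subseteq J_2(N)$. Thus $J_2(N)$ is a quasiregular ideal containing every quasiregular ideal, i.e. the largest one. I anticipate no genuine obstacle: the only point needing care is invoking $J_2(N)$ as a quasiregular left ideal in the converse direction, which is legitimate precisely because $J_2(N)$ was shown to be a two-sided (hence left) ideal earlier.
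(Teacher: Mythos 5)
Your proof is correct and is exactly the derivation the paper intends: the corollary is stated without proof, as an immediate consequence of Lemma~\ref{lem:qreg_in_R} together with the inclusion $R(N) \subseteq J_2(N)$ and the fact that $J_2(N)$ is a (two-sided, hence left) ideal. Your careful note that the converse direction needs $J_2(N)$ to be a left ideal is precisely the one point worth making explicit.
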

	\begin{lemma}
		Let $Q \tensor*[_N]{\unlhd}{} N$ be a quasiregular left ideal, $q \in Q$, and $y = 1 \rdiv q$. Then 
		$y$ is invertible, i.e. $y \in U(N)$, and $y^\lambda = y^{-1}$.
		\label{lem:qreg_id-inv}
	\end{lemma}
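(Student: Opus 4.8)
The plan is to show that the one-sided inverse $y^\lambda$ is in fact two-sided. Since $q \in Q$ and $Q$ is quasiregular, $y = 1 \rdiv q$ has a left inverse $y^\lambda$ with $y^\lambda y = 1$; the task is to upgrade this to a genuine inverse. The crucial observation is that it suffices to produce a left inverse $w$ of $y^\lambda$ as well, because associativity of multiplication then finishes the argument at once: from $w y^\lambda = 1$ and $y^\lambda y = 1$ together with unitality we get $w = w\,1 = w(y^\lambda y) = (w y^\lambda)y = 1\cdot y = y$, whence $y y^\lambda = w y^\lambda = 1$. Combined with $y^\lambda y = 1$ this gives $y \in U(N)$ and $y^{-1} = y^\lambda$. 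So the whole problem reduces to exhibiting a left inverse of $y^\lambda$.

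To this end I would show that $y^\lambda$ is again of the form $1 \rdiv q_1$ for some $q_1 \in Q$; quasiregularity of $Q$ then supplies the desired left inverse of $y^\lambda$ directly from the definition. Equivalently, letting $q_1$ denote the solution of $y^\lambda + q_1 = 1$, the goal is $q_1 \in Q$, i.e.\ $y^\lambda + Q = 1 + Q$. The main obstacle is precisely that a \emph{right} loop near-ring lacks left distributivity, so I cannot simply expand a product like $y^\lambda(y+q)$ and read off the answer by a direct calculation. The device that circumvents this is to pass to the quotient: being a left ideal, $Q$ is a left $N$-submodule of $N$, so as recalled above $N/Q$ carries a natural left $N$-module structure and the projection $\pi \colon N \to N/Q$ is a homomorphism of left $N$-modules.

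In $N/Q$ the computation becomes immediate, and this is the heart of the proof. From $y + q = 1$ with $q \in Q = \ker \pi$ I get $\pi(1) = \pi(y+q) = \pi(y) + \pi(q) = \pi(y)$. Applying $\pi$ to $y^\lambda y = 1$ and using that $\pi$ respects the left $N$-action then yields $\pi(1) = \pi(y^\lambda y) = y^\lambda\,\pi(y) = y^\lambda\,\pi(1) = \pi(y^\lambda\,1) = \pi(y^\lambda)$. Hence $y^\lambda + Q = 1 + Q$, so $1 \in y^\lambda + Q$ and there is indeed some $q_1 \in Q$ with $y^\lambda + q_1 = 1$, that is, $y^\lambda = 1 \rdiv q_1$. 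With $q_1 \in Q$ quasiregular, $y^\lambda = 1 \rdiv q_1$ has a left inverse $w$, and the reduction of the first paragraph completes the proof. I expect the only delicate point to be the legitimacy of working in $N/Q$ — namely that $N/Q$ is a left $N$-module and $\pi$ a module homomorphism — which is exactly where the left-ideal hypothesis on $Q$ is used and where the absent left distributivity is sidestepped; everything else is the short associativity manipulation.
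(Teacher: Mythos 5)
Your proof is correct and follows essentially the same route as the paper: both establish $y^\lambda + Q = 1 + Q$ from the left-ideal property of $Q$ (the paper via the element-wise condition $n(y+q)+Q = ny+Q$ of Proposition~\ref{prop:substructures}, you via the equivalent quotient-module computation in $N/Q$), conclude that $y^\lambda = 1 \rdiv q_1$ for some $q_1 \in Q$ and hence has a left inverse by quasiregularity. You additionally spell out the final associativity step ($w = w(y^\lambda y) = (wy^\lambda)y = y$) that the paper leaves implicit, which is a welcome bit of completeness.
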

	\begin{proof}
		We are going to prove that $y^\lambda$ has a left inverse. Since $Q$ is a left ideal, 
		$n + Q = n(y+q) + Q = ny + Q$ holds for all $n \in N$ by Proposition~\ref{prop:substructures}. Picking $n = y^\lambda$ we obtain 
		$y^\lambda + Q = 1 + Q$. Hence, if $x = y^\lambda \ldiv 1$ solves the equation $y^\lambda + x = 1$, then 
		$x \in Q$, so $x$ is quasiregular and $y^\lambda$ has a left inverse.		
	\end{proof}
	\begin{definition}
		A homomorphism $\psi \colon N \to M$, of loop near-rings $N$ and $M$, is {\em local} if $\psi(u) \in U(M)$ implies 
		$u \in U(N)$.
		\label{def:localhom}
	\end{definition}
	The following theorem states that quasiregular ideals in $N$ are precisely the kernels of local homomorphisms.
	\begin{theorem}
		\label{thm:qreg_ur}
		The kernel of a local homomorphism is a quasiregular ideal.
		An ideal $Q \unlhd N$ is quasiregular if and only if the quotient homomorphism $N \to N/Q$ is local.
	\end{theorem}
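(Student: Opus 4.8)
The plan is to split the theorem into three implications and notice that two of them collapse into a single lifting argument. Writing $\pi \colon N \to N/Q$ for the quotient homomorphism, one has $\ker \pi = Q$, so the backward half of the equivalence (``$\pi$ local $\Rightarrow Q$ quasiregular'') is literally the special case of the theorem's first sentence applied to $\pi$. Accordingly I would prove (i) the kernel of any local homomorphism is a quasiregular ideal, and (ii) if $Q \unlhd N$ is quasiregular then $\pi$ is local; the stated equivalence then drops out by feeding $\pi$ into (i).

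For (i), let $\psi \colon N \to M$ be local and put $Q = \ker \psi$, which is already known to be an ideal. To check quasiregularity, I would take $q \in Q$ and let $y = 1 \rdiv q$ solve $y + q = 1$. Applying $\psi$ and using $\psi(q) = 0$ together with $\psi(1) = 1$ gives $\psi(y) = 1 \in U(M)$, so locality forces $y \in U(N)$; in particular $y$ has a left inverse, which is exactly what quasiregularity of $q$ demands. This argument is short, and as observed it simultaneously settles the ``$\pi$ local $\Rightarrow Q$ quasiregular'' direction of the equivalence.

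For (ii), suppose $Q$ is a quasiregular ideal and $\pi(u) \in U(N/Q)$. Picking a preimage $v$ of $\pi(u)^{-1}$ yields $uv + Q = 1 + Q$ and $vu + Q = 1 + Q$ in $N/Q$. The key step is to convert each coset equality into the hypothesis of Lemma~\ref{lem:qreg_id-inv}: since $1 \in uv + Q$, there is $q' \in Q$ with $uv + q' = 1$, that is $uv = 1 \rdiv q'$, and likewise $vu = 1 \rdiv q''$ for some $q'' \in Q$. Lemma~\ref{lem:qreg_id-inv} then promotes both $uv$ and $vu$ to units of $N$. From $uv \in U(N)$ I would read off a right inverse $v(uv)^{-1}$ of $u$, and from $vu \in U(N)$ a left inverse $(vu)^{-1}v$; because $(N,\cdot)$ is a semigroup, these one-sided inverses must coincide, whence $u \in U(N)$ and $\pi$ is local.

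The main obstacle is the bookkeeping forced by non-associativity of addition: I must keep left and right differences straight so that the relations $uv + Q = 1 + Q$ and $vu + Q = 1 + Q$ genuinely produce elements of $Q$ of the precise form $1 \rdiv q$ required by Lemma~\ref{lem:qreg_id-inv}, rather than a variant such as $q \ldiv 1$ that the lemma does not directly address. The closing move, that a left inverse and a right inverse of $u$ agree, is exactly where associativity of multiplication (the semigroup axiom for $(N,\cdot)$) is indispensable and should be invoked explicitly.
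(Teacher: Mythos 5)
Your proposal is correct and follows essentially the same route as the paper: the same reduction of the ``if'' direction to the first sentence, the same computation $\psi(1 \rdiv k) = 1 \rdiv 0 = 1$ for the kernel, and the same appeal to Lemma~\ref{lem:qreg_id-inv} to promote $uv$ and $vu$ to units. Your only addition is spelling out why a left and a right inverse of $u$ must coincide via associativity of multiplication, a step the paper leaves implicit.
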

	\begin{proof}
		Let $\psi \colon N \to M$ be a local homomorphism. Pick any $k \in \ker \psi$ and let $y = 1 \rdiv k$. 
		Then $\psi(y) = \psi(1 \rdiv k) = \psi(1)\rdiv \psi(k) = 1\rdiv 0 = 1$, which is a unit. Hence $y$ is a unit and $k$ is 
		quasiregular. 
		
		It remains to prove the `only if' part of the second statement. If $u + Q$ is invertible in $N/Q$, then there is 
		a $v \in N$, such that $uv + Q = vu + Q  = 1 + Q$. Hence $uv + p = 1$ and $vu + q = 1$ for some $p, q \in Q$. 
		Since $Q$ is a quasiregular (left) ideal it follows from Lemma~\ref{lem:qreg_id-inv} that $uv \in U(N)$ and $vu \in U(N)$, 
		which implies $u \in U(N)$.		
	\end{proof}
	\begin{remark}
	In~\cite{FraPav} and~\cite{FraPav2} local homomorphisms were called {\em unit-reflecting} homomorphisms. The author is grateful to the referee for making him 
	aware that `local homomorphism' is the accepted term in ring theory. Local homomorphisms between rings in the generality of 
	Definition~\ref{def:localhom} have already been used in~\cite{CampsDicks} and~\cite{FacchiniHerbera}.
	\end{remark}
\section{Local loop near-rings} 
	\label{sect:local}
	Local near-rings were introduced by Maxson in~\cite{Maxson}. His main definition is different from ours below, but equivalent 
	to it in case $N$ is a near-ring, see~\cite[Theorem 2.8]{Maxson}. Our discussion will be restricted to unital, zero-symmetric loop 
	near-rings.
	\begin{definition}
		A loop near-ring $N$ is {\em local} if it has a unique maximal left $N$-subloop.
	\end{definition}
	For $N$ local, we will usually denote the unique maximal left $N$-subloop by $\m$, and also write $(N, \m)$ to emphasize the role 
	of $\m$.
	
	In a local loop near-ring $(N,\m)$, the elements of $\m$ do not have left inverses, since $\m$ is a proper left $N$-subloop in $N$. 
	On the other hand, for any $u \in N \setminus \m$ we have $N u = N$, hence $u^\lambda u = 1$ for some $u^\lambda \in N$. Every 
	element not contained in $\m$ is left invertible. 
	
	Suppose that some $n \in \m$ is right invertible, i.e. there is an $n^\rho \in N$ such that $n n^\rho = 1$. As $n^\rho n \in \m$, it 
	follows that $1\rdiv n^\rho n \notin \m$, since $1 \notin \m$. Let $u$ be a left inverse of $1\rdiv n^\rho n$. Then 
	$n^\rho = u (1 \rdiv n^\rho n) n^\rho = u (n^\rho \rdiv n^\rho) = u 0 = 0$, a contradiction. Elements of $\m$ 
	do not even have right inverses.
	\begin{proposition}
		If $(N,\m)$ is local, then $N$ is the disjoint union $\m \cup U(N)$.
		\label{prop:local}
	\end{proposition}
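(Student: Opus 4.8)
The plan is to establish the two halves of the claim separately: that $\m$ and $U(N)$ are disjoint, and that together they exhaust $N$. Disjointness is immediate from the discussion preceding the proposition. Every $u \in U(N)$ possesses a (two-sided, in particular left) inverse, whereas no element of the proper left $N$-subloop $\m$ admits a left inverse. Hence $\m \cap U(N) = \emptyset$, and the only work lies in the covering statement.

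For the covering $N = \m \cup U(N)$, it suffices to show that every $u \in N \setminus \m$ is in fact a unit. As already noted, such a $u$ satisfies $Nu = N$ and therefore has a left inverse $u^\lambda$ with $u^\lambda u = 1$. The key observation I would make is that $u^\lambda$ cannot itself lie in $\m$: the equation $u^\lambda u = 1$ exhibits $u$ as a right inverse of $u^\lambda$, and it was shown just above that elements of $\m$ have no right inverses. Thus $u^\lambda \in N \setminus \m$ as well, and feeding $u^\lambda$ back through the same reasoning yields a left inverse $w$ of $u^\lambda$, i.e. $w u^\lambda = 1$.

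It then remains to upgrade these one-sided inverses into a genuine two-sided inverse, and here associativity of the multiplication is used decisively. From $w u^\lambda = 1$ and $u^\lambda u = 1$ one computes $w = w(u^\lambda u) = (w u^\lambda)u = u$, whence $u u^\lambda = w u^\lambda = 1$; combined with $u^\lambda u = 1$ this gives $u \in U(N)$ with $u^{-1} = u^\lambda$. The main obstacle is precisely this last passage — going from the cheap left invertibility that localness hands us for free to full invertibility — and the device that overcomes it is to invoke the localness hypothesis a \emph{second} time on the left inverse $u^\lambda$, after which the associativity of $(N,\cdot)$ forces the two one-sided inverses to coincide.
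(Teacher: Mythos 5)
Your proof is correct and follows essentially the same route as the paper: both use the preceding discussion to get a left inverse $u^\lambda$ for any $u \notin \m$, observe that $u^\lambda \notin \m$ because elements of $\m$ admit no right inverses, and then upgrade to a two-sided inverse via associativity. You simply spell out the final monoid-theoretic step that the paper's proof leaves implicit in the phrase ``hence $N \setminus \m \subseteq U(N)$.''
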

	\begin{proof}
		It follows from above discussion that every $u \in N \setminus \m$ has a left inverse $u^\lambda \in N \setminus \m$, hence 
		$N \setminus \m \subseteq U(N)$. The reverse inclusion is clear, and we can conclude $N \setminus U(N) = \m$ or 
		$N = \m \cup U(N)$.		
	\end{proof}
	\begin{theorem}
		\label{thm:local}
		Let $N$ be a loop near-ring, such that $J_2(N) \neq N$, i.e. $N$ has at least one $N$-maximal left ideal. The following 
		properties are then equivalent:
	\begin{enumerate}[(a)]
			\item $(N,\m)$ is local, i.e. $N$ has a unique maximal left $N$-subloop $\m \tensor*[_N]{\lneq}{} N$.
			\item $N$ has a unique $N$-maximal left ideal $K \tensor*[_N]{\unlhd}{} N$ and this $K$ is quasiregular.
			\item $J_2(N)$ is quasiregular and $N/J_2(N)$ is a loop near-field.
			\item $N \setminus U(N)$ is an ideal in $N$.
			\item $N \setminus U(N)$ is a subloop in $(N,+)$.
			\item $m+n \in U(N)$ implies $m \in U(N)$ or $n \in U(N)$.
		\end{enumerate}
		Moreover, in any of the above cases the equalities 
		$\m = R(N) = J_2(N) = N \setminus U(N)$ hold.
	\end{theorem}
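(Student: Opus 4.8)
The plan is to establish the six equivalences through one main cycle supplemented by the auxiliary equivalence $(b)\Leftrightarrow(c)$, and to harvest the four equalities $\m=R(N)=J_2(N)=N\setminus U(N)$ as a byproduct of the steps that pass through $(a)$ and $(b)$. Concretely I would prove $(a)\Rightarrow(b)\Rightarrow(a)$, then $(b)\Leftrightarrow(c)$, and finally the chain $(a)\Rightarrow(d)\Rightarrow(e)\Rightarrow(f)\Rightarrow(b)$, which links all six conditions.

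First, for $(a)\Rightarrow(b)$: assuming $(a)$, Proposition~\ref{prop:local} gives the disjoint decomposition $N=\m\cup U(N)$, so $\m=N\setminus U(N)$, and $\m=R(N)$ since $\m$ is the only maximal left $N$-subloop; by Lemma~\ref{lem:qreg_in_R} this $R(N)$ is quasiregular. The standing hypothesis $J_2(N)\ne N$ yields an $N$-maximal left ideal, which is in particular a maximal left $N$-subloop and hence equals $\m$; thus $\m$ is a left ideal, it is the unique $N$-maximal left ideal, and $J_2(N)=\m$. As $J_2(N)$ is two-sided, this already delivers all four equalities together with condition $(b)$. For the converse $(b)\Rightarrow(a)$ I would use the sandwich $K\subseteq R(N)\subseteq J_2(N)\subseteq K$: the first inclusion is Lemma~\ref{lem:qreg_in_R} applied to the quasiregular ideal $K$, the middle one is the general containment, and the last holds because $J_2(N)$ is the intersection of all $N$-maximal left ideals. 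Hence $R(N)=K$, and any maximal left $N$-subloop $I$ satisfies $K=R(N)\subseteq I\subsetneq N$, forcing $I=K$ by maximality of $K$; so $K$ is the unique maximal left $N$-subloop and $(a)$ holds.

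For $(b)\Leftrightarrow(c)$: once $(a)$ is available, $N/J_2(N)$ is a loop near-field, because any nonzero class $n+J_2(N)$ has $n\notin J_2(N)=N\setminus U(N)$, so $n\in U(N)$ and its image is a (two-sided) unit; together with quasiregularity of $J_2(N)=K$ this is $(c)$. For $(c)\Rightarrow(b)$, quasiregularity of $J_2(N)$ forces $J_2(N)=R(N)$ by the corollary following Lemma~\ref{lem:qreg_in_R}; since a loop near-field $F=N/J_2(N)$ is $F$-simple (a nonzero element is invertible, hence generates), its only left $N$-subloops are $0$ and $F$, so Proposition~\ref{prop:correspondence}, applied using the fact that every $N$-maximal left ideal contains $J_2(N)$, shows the unique $N$-maximal left ideal to be $J_2(N)$ itself. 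The remaining chain is light: $(a)\Rightarrow(d)$ because $N\setminus U(N)=\m=J_2(N)$ is a two-sided ideal; $(d)\Rightarrow(e)$ since ideals are subloops; and $(e)\Rightarrow(f)$ since a subloop is closed under addition, whose contrapositive is precisely $(f)$. The key step is $(f)\Rightarrow(b)$: taking an $N$-maximal left ideal $K$ (which exists) and $q\in K$, the solution $y$ of $y+q=1$ satisfies $y+q=1\in U(N)$, so $(f)$ together with $q\notin U(N)$ forces $y\in U(N)$; thus $K$ is quasiregular, and the sandwich above again yields $K=R(N)=J_2(N)$ and uniqueness of $K$.

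I expect the main obstacle to be the bookkeeping of the two hierarchies of substructures — maximal left $N$-subloops versus $N$-maximal left ideals — and in particular recovering the unique maximal left $N$-subloop of $(a)$ from the a priori weaker conditions $(d)$--$(f)$. The crux is that quasiregularity collapses the chain $R(N)\subseteq J_2(N)$ onto a single $N$-maximal left ideal via $K\subseteq R(N)\subseteq J_2(N)\subseteq K$; once this is secured, uniqueness on both the subloop side and the ideal side follows from maximality. A secondary subtlety is verifying that the quotient in $(c)$ is a genuine loop near-field, i.e. that every nonzero class is two-sided invertible rather than merely one-sided invertible, which is exactly why I route $(b)\Rightarrow(c)$ through $(a)$ and Proposition~\ref{prop:local}.
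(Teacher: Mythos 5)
Your argument is correct, and its skeleton --- the cycle through (a), (b), (c) driven by Lemma~\ref{lem:qreg_in_R} together with the containment $R(N)\subseteq J_2(N)$, followed by the chain (d) $\Rightarrow$ (e) $\Rightarrow$ (f) --- is the same as the paper's; but two legs are routed differently. For the entry into (d), the paper proves (c) $\Rightarrow$ (d) by invoking Theorem~\ref{thm:qreg_ur} (the quotient map $N\to N/J_2(N)$ is a local homomorphism, so units can be lifted from the loop near-field $N/J_2(N)$), whereas you prove (a) $\Rightarrow$ (d) by reading $N\setminus U(N)=\m=J_2(N)$ off Proposition~\ref{prop:local}; this avoids Theorem~\ref{thm:qreg_ur} entirely. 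For the return from (f), the paper proves (f) $\Rightarrow$ (c): it checks that $J_2(N)$ is quasiregular and then shows every nonzero class of $N/J_2(N)$ is left invertible via the identity $K+Nv=N$ (Proposition~\ref{prop:K+I}); you instead prove (f) $\Rightarrow$ (b) by observing that (f) makes every $N$-maximal left ideal $K$ quasiregular and then collapsing the sandwich $K\subseteq R(N)\subseteq J_2(N)\subseteq K$ --- the same sandwich you use for (b) $\Rightarrow$ (a). Your version is a little more economical, since it reuses that sandwich and skips the $K+Nv=N$ computation; the price is that you must route (b) $\Rightarrow$ (c) through (a) and Proposition~\ref{prop:local} to obtain genuinely two-sided units in the quotient, where the paper argues directly that every nonzero class is left invertible and hence that the nonzero classes form a group. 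Both treatments produce the equalities $\m=R(N)=J_2(N)=N\setminus U(N)$ at essentially the same point, and I see no gaps in yours.
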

	\begin{proof}
		(a) $\Rightarrow$ (b): Every $N$-maximal left ideal $K$ is contained in $\m$. Since $K$ is also maximal as a left $N$-subloop, 
			it follows $K = \m = R(N)$. Quasiregularity now follows from Lemma~\ref{lem:qreg_in_R}.
		
		(b) $\Rightarrow$ (c): If $K \tensor*[_N]{\unlhd}{} N$ is the unique $N$-maximal left ideal, $J_2(N) = K$ and, since $K$ is 
			quasiregular, $J_2(N)$ is quasiregular. Also, $K$ is a maximal left $N$-subloop, so $Nu = N$ for every $u \in N \setminus K$, 
			in particular $u^\lambda u = 1$ for some $u^\lambda \in N$. Hence, every nonzero class $u + K \in N/K$ 
			has a left inverse $u^\lambda + K \in N/K$, which implies that $N/K \setminus \{K\}$ is a group with respect to multiplication.
			
		(c) $\Rightarrow$ (b): If $N/J_2(N)$ is a loop near-field, $N/J_2(N)$ is $N$-simple when viewed as a left $N$-module, hence 
			$J_2(N)$ is an $N$-maximal left ideal, which is clearly unique.
			
		(b) $\Rightarrow$ (a): Suppose $K$ is the unique $N$-maximal left ideal in $N$. Then $R(N) \subseteq J_2(N) = K$. Moreover, 
		 $K \subseteq R(N)$ by Lemma~\ref{lem:qreg_in_R}, since $K$ is quasiregular. Hence $K = R(N) = \m$.
			
		(c) $\Rightarrow$ (d): By Theorem~\ref{thm:qreg_ur} an element $n \in N$ is invertible if and only if $n + J_2(N)$ is invertible in 
			$N/J_2(N)$. By (c), $U(N/J_2(N)) = N/J_2(N) \setminus \{J_2(N)\}$, hence $U(N) = N \setminus J_2(N)$ or 
			$N \setminus U(N) = J_2(N)$, which is an ideal.
			
		(d) $\Rightarrow$ (e) $\Rightarrow$ (f): These are tautologies.
		
		(f) $\Rightarrow$ (c): Since $J_2(N) \neq N$, we must have $J_2(N) \subseteq N \setminus U(N)$. Let $y$ solve the equation 
			$y + j = 1$ for $j \in J_2(N)$. Since $j \notin U(N)$, (f) implies $y \in U(N)$, hence $J_2(N)$ is quasiregular.
			
			Take any $v \notin J_2(N)$ and let $K$ be an $N$-maximal left ideal, which does not contain $v$. Note that $K + Nv$ is 
			a left $N$-subloop by Proposition~\ref{prop:K+I}, and, since $K$ is a maximal left $N$-subloop, $K + Nv = N$. Therefore 
			$k + uv = 1$ for some $k \in K$ and $u \in N$. Now, by (f), $k \notin U(N)$ implies $uv \in U(N)$. We have just shown that 
			every nonzero class $v + J_2(N)$ has a left inverse, hence $N/J_2(N) \setminus \{J_2(N)\}$ is a group with respect 
			to multiplication.			
	\end{proof}
	\begin{remark}
		We note, without proof, that in a local loop near-ring $(N,\m)$ with $J_2(N) \neq N$, all radical-like ideals and subsets are equal, 
		i.e.
		\[
			\m = J_0(N) = D(N) = R(N) = J_1(N) = J_2(N) \textrm{. }
		\]
	\end{remark}
	As is the case for local rings, (zero-symmetric) local loop near-rings cannot contain proper nontrivial idempotents.
	\begin{lemma}
		If $e$ is an idempotent in a local loop near-ring $(N,\m)$, then either $e = 0$ or $e = 1$.
	\end{lemma}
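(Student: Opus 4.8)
The plan is to split the argument according to the disjoint decomposition $N = \m \cup U(N)$ furnished by Proposition~\ref{prop:local}: either the idempotent $e$ is a unit, or it lies in $\m$. Since multiplication in $N$ is associative (it is a semigroup), the idempotence condition is simply $ee = e$, and I intend to exploit this together with whichever of the two alternatives holds.

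First I would dispose of the easy case $e \in U(N)$. Multiplying $ee = e$ on the left by $e^{-1}$ and using associativity gives $(e^{-1}e)e = e^{-1}e$, that is, $1\cdot e = 1$, so $e = 1$.

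The substantive case is $e \in N \setminus U(N) = \m$. Here I would \emph{not} attempt a direct algebraic cancellation, but instead route everything through the quasiregularity machinery. By the concluding equalities of Theorem~\ref{thm:local} we have $\m = R(N)$, and by Lemma~\ref{lem:qreg_in_R} the left $N$-subloop $R(N)$ is quasiregular. Hence every element of $\m$ is quasiregular; in particular $e$ is a quasiregular idempotent. But Remark~\ref{rem:qreg_idem_0} shows that $0$ is the \emph{unique} quasiregular idempotent, so $e = 0$. Combining the two cases yields $e \in \{0,1\}$.

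The only real obstacle is conceptual rather than computational: because addition is nonassociative, a hands-on manipulation of $y = 1 \rdiv e$ and the equation $y + e = 1$ is clumsy, and the temptation to argue directly should be resisted. The clean route is to recognize that $\m$ coincides with $R(N)$ and is therefore quasiregular, after which the already-proved uniqueness of the quasiregular idempotent finishes the argument immediately. All remaining verifications (associativity, zero-symmetry used inside Remark~\ref{rem:qreg_idem_0}) are routine and already available.
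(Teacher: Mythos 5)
Your proof is correct and is essentially the paper's own argument: split $N = \m \cup U(N)$ via Proposition~\ref{prop:local}, note that $1$ is the only idempotent in the group $U(N)$, and use quasiregularity of $\m = R(N)$ (Lemma~\ref{lem:qreg_in_R}) together with Remark~\ref{rem:qreg_idem_0} to kill the idempotent in $\m$. One small remark: you need not route the identification $\m = R(N)$ through the concluding equalities of Theorem~\ref{thm:local} (which carry the extra hypothesis $J_2(N) \neq N$); it is immediate from the definitions, since $R(N)$ is the intersection of all maximal left $N$-subloops and localness says there is exactly one.
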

	\begin{proof}
		By Proposition~\ref{prop:local} either $e \in \m$ or $e \in U(N)$. By Lemma~\ref{lem:qreg_in_R} the $N$-subloop $\m = R(N)$ is 
		quasiregular, so $0$ is the only idempotent in $\m$ (Remark~\ref{rem:qreg_idem_0}). On the other hand, the identity 
		$1$ is the only idempotent in the multiplicative group $U(N)$.		
	\end{proof}
	\begin{lemma}
		\label{lem:pre_local}
		Let $\psi \colon N \to M$ be a local homomorphism of loop near-rings. Assume $J_2(N) \neq N$ and $J_2(M) \neq M$. 
		If $M$ is local, then $N$ is also local.
	\end{lemma}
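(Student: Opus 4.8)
The plan is to reduce everything to the criterion-based characterization of localness supplied by Theorem~\ref{thm:local}. Since we are given $J_2(M) \neq M$ and $M$ is local, that theorem tells us $M$ satisfies all of the equivalent conditions (a)--(f); the one I would exploit is condition (f): if $x + y \in U(M)$, then $x \in U(M)$ or $y \in U(M)$. Symmetrically, since $J_2(N) \neq N$, Theorem~\ref{thm:local} says that to prove $N$ local it suffices to verify condition (f) for $N$. So the whole lemma becomes the task of transporting condition (f) from $M$ back to $N$ along $\psi$.

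The first preliminary step is the observation that any homomorphism of (unital) loop near-rings carries units to units: if $u \in U(N)$ with inverse $u^{-1}$, then applying $\psi$ to $u u^{-1} = u^{-1} u = 1$ and using $\psi(1) = 1$ shows $\psi(u)\psi(u^{-1}) = \psi(u^{-1})\psi(u) = 1$, so $\psi(u) \in U(M)$. This is the direction of unit-preservation that always holds for homomorphisms; the reverse implication, $\psi(u) \in U(M) \Rightarrow u \in U(N)$, is exactly the hypothesis that $\psi$ is \emph{local} in the sense of Definition~\ref{def:localhom}.

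With these two facts in hand, verifying condition (f) for $N$ is immediate. Suppose $m + n \in U(N)$ for some $m, n \in N$. Applying $\psi$ and using that it is a homomorphism gives $\psi(m) + \psi(n) = \psi(m+n)$, and by unit-preservation $\psi(m+n) \in U(M)$. Condition (f) for $M$ then forces $\psi(m) \in U(M)$ or $\psi(n) \in U(M)$, and localness of $\psi$ upgrades this to $m \in U(N)$ or $n \in U(N)$. This is precisely condition (f) for $N$, so Theorem~\ref{thm:local} (applicable because $J_2(N) \neq N$) concludes that $N$ is local.

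I do not expect any genuine obstacle here: the content is entirely in correctly invoking Theorem~\ref{thm:local} in both directions, which is why the two hypotheses $J_2(N) \neq N$ and $J_2(M) \neq M$ are indispensable. The only point demanding a moment's care is making sure one uses the arithmetic condition (f) rather than, say, the ideal condition (d), since (f) is the statement that pulls back cleanly under an arbitrary local homomorphism without needing $\psi$ to be surjective or to interact well with submodule structure.
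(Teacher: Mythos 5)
Your proof is correct and follows essentially the same route as the paper's: verify condition (f) of Theorem~\ref{thm:local} for $N$ by pushing $m+n \in U(N)$ forward to $M$ (homomorphisms preserve units), applying (f) for $M$, and pulling the conclusion back via localness of $\psi$. The only difference is that you spell out the unit-preservation step explicitly, which the paper leaves implicit.
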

	\begin{proof}
		Pick $m, n \in N$, such that $m+n \in U(N)$. Then $\psi(m+n) = \psi(m) + \psi(n)$ is a unit in $M$. By 
		Theorem~\ref{thm:local} either $\psi(m)$ or $\psi(n)$ is a unit in $M$. Now, $\psi$ is local, so either $m$ or $n$ 
		is a unit in $N$, which shows that $N$ is local by another use of Theorem~\ref{thm:local}.		
	\end{proof}
	If $M$ is a ring, the assumption $J_2(N) \neq N$ in Lemma~\ref{lem:pre_local} is unnecessary.
	\begin{lemma}
		Assume $N$ is a loop near-ring, $R$ a nontrivial ring, and $\psi \colon N \to R$ a loop near-ring homomorphism. Then 
		$J_2(N)$ is a proper ideal in $N$.
	\end{lemma}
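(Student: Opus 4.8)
The plan is to reduce the statement to showing that $J_2(N) \neq N$: once this is known, the fact that $J_2(N)$ is a two-sided ideal (established in Section~\ref{sect:jac}) finishes the claim. Equivalently, I must exhibit at least one $N$-maximal left ideal, and the strategy is to transport one down from the nontrivial ring $R$ along $\psi$. The naive idea of pulling back a simple $R$-module does not quite work, because the $N$-action on it factors through the possibly-small image $\psi(N)$, so I first pass to that image.

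First I would set $R' := \im \psi = \psi(N)$. Since $\psi$ restricts to a loop homomorphism of $(N,+)$, we have $\psi(a \rdiv b) = \psi(a) \rdiv \psi(b) = \psi(a) - \psi(b)$, so $R'$ is closed under subtraction and is therefore an additive subgroup of $R$; being also closed under multiplication and containing $e := \psi(1)$, which acts as a two-sided multiplicative identity on $R'$, the image $R'$ is a unital subring of $R$. Because $R$ is a nontrivial (unital) ring and $\psi(1) = 1_R \neq 0$, the ring $R'$ is nontrivial. A nontrivial unital ring has a maximal left ideal $\mathfrak{a}$ by Zorn's lemma, and $S := R'/\mathfrak{a}$ is a nonzero simple left $R'$-module.

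Next I would regard $S$ as a left $N$-module via $n \cdot s := \psi(n)\,s$; the module axioms hold exactly as in the $(N,N)$-bimodule structure placed on the target in the remark following Definition~\ref{def:substructures}. The crucial claim is that $S$ is $N$-simple. By Proposition~\ref{prop:substructures}(c) a left $N$-subloop $I \subseteq S$ is a subloop of $(S,+)$ closed under the $N$-action; since $(S,+)$ is an abelian group, $I$ is a subgroup, and since $\psi(N) = R'$ the condition $NI \subseteq I$ is precisely $R'I \subseteq I$. Thus the left $N$-subloops of $S$ are exactly the $R'$-submodules of $S$, and simplicity of $S$ over $R'$ forces $I = 0$ or $I = S$. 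Choosing a nonzero $a \in S$, the map $\phi_a \colon N \to S$, $n \mapsto \psi(n)\,a$, is a surjective homomorphism of left $N$-modules (its image is $R'a = S$), whose kernel $K = \ker \phi_a$ is, by Proposition~\ref{prop:correspondence}, a maximal left $N$-subloop, i.e.\ an $N$-maximal left ideal; as $S \neq 0$ we have $K \neq N$. Since $J_2(N)$ is contained in every $N$-maximal left ideal, $J_2(N) \subseteq K \subsetneq N$, as required.

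The step I expect to carry the real weight is the $N$-simplicity claim, and the reason the passage to $R' = \psi(N)$ is not a mere convenience: a simple $R$-module need not have its submodule lattice detected by $\psi(N)$, so it need not be $N$-simple. Replacing $R$ by $R'$ and verifying that $R'$ is a genuine unital ring — closure under subtraction coming from $\rdiv$ becoming ordinary subtraction, and the identity coming from $\psi(1)$ — is exactly what makes the $N$-action sweep out all of $R'$ and forces the left $N$-subloops of $S$ to coincide with its $R'$-submodules.
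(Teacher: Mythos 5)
Your argument is correct and is essentially the paper's proof: both pass to the nontrivial unital subring $\im\psi$, take a maximal left ideal there, and pull it back to an $N$-maximal left ideal of $N$ (your $\ker\phi_a$ with $a = 1 + \mathfrak{a}$ is exactly the paper's $\psi^{-1}(K)$), whence $J_2(N) \subseteq \ker\phi_a \subsetneq N$. You merely spell out in more detail the verification, via $N$-simplicity of $R'/\mathfrak{a}$, that this preimage really is an $N$-maximal left ideal, which the paper asserts in one line.
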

	\begin{proof}
		Since $\psi(0) = 0$ and $\psi(1) = 1$, $\im \psi$ is a nontrivial subring in $R$, so  
		$J(\im \psi) = J_2(\im \psi) \neq \im \psi$. Note that the preimage of a maximal left ideal 
		$K \tensor*[_{\im \psi}]{\unlhd}{} \im \psi$ is an $N$-maximal left ideal 
		$\psi^{-1}(K) \tensor*[_N]{\unlhd}{} N$. This can be restated as $J_2(N) \subseteq \psi^{-1}(J(\im \psi))$, and, 
		since $\psi^{-1}(J(\im \psi)) \neq N$, $J_2(N) \neq N$.		
	\end{proof}
	\begin{corollary}
		Assume there is a local homomorphism $\psi \colon N \to R$ from a loop near-ring $N$ to a local ring $R$. 
		Then $J_2(N) \neq N$, $N$ is a local loop near-ring, and $N/J_2(N)$ is a division ring.
	\end{corollary}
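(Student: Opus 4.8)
The plan is to extract the three assertions in order, getting the first two almost for free from the two preceding lemmas and reserving the real work for the division-ring claim. Since a local ring is in particular a nontrivial ring, the lemma immediately preceding this corollary applies verbatim to $\psi \colon N \to R$ and yields $J_2(N) \neq N$, which is the first assertion. For the second, I would first observe that a local ring $R$ is itself local as a loop near-ring: by Proposition~\ref{prop:substructures}(c) the left $R$-subloops of $R$ are exactly its left ideals, so the unique maximal left ideal $J(R)$ is the unique maximal left $R$-subloop, and $J_2(R) = J(R) \neq R$. Thus Lemma~\ref{lem:pre_local} (with $M = R$) applies---its hypotheses $J_2(N) \neq N$ and $J_2(R) \neq R$ both hold, and the former is in any case unnecessary when the target is a ring---and shows that $N$ is local.

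The substance is to upgrade the quotient $N/J_2(N)$ from a loop near-field, which is all that Theorem~\ref{thm:local}(c) provides, to a genuine division ring. The idea is to realize $N/J_2(N)$ as a subring of the division ring $R/J(R)$ by means of $\psi$. I would check two facts about $\psi$: it carries units to units, since it is unital and $\psi(u)\psi(u^{-1}) = \psi(1) = 1$; and it carries non-units to non-units, which is exactly the locality of $\psi$ read contrapositively. Because both $N$ and $R$ are local, the ``moreover'' clause of Theorem~\ref{thm:local} gives $N \setminus U(N) = J_2(N)$ and the analogous ring fact gives $R \setminus U(R) = J(R)$; the two observations then combine to yield $\psi^{-1}(J(R)) = J_2(N)$. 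Composing $\psi$ with the quotient map $R \twoheadrightarrow R/J(R)$ therefore produces a loop near-ring homomorphism $\theta \colon N \to R/J(R)$ with $\ker \theta = J_2(N)$, and Proposition~\ref{prop:correspondence} furnishes an isomorphism $N/J_2(N) \cong \im \theta$.

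It then remains to see that $\im \theta$, a subloop near-ring of the division ring $R/J(R)$, is a division ring. Since addition in $R/J(R)$ is a commutative group operation and $\im \theta$ is closed under $+$, $\ldiv$, and $\rdiv$, the addition on $\im \theta$ is merely the restriction of that group operation; together with the two distributive laws inherited from the ambient ring, this exhibits $\im \theta$ as a subring of $R/J(R)$ containing $\theta(1) = 1$. Finally, $N/J_2(N)$ is a loop near-field by Theorem~\ref{thm:local}(c), so through the isomorphism every nonzero element of $\im \theta$ has a multiplicative inverse lying in $\im \theta$; a subring of a division ring in which every nonzero element is invertible is a division ring. I expect the transfer of associativity and distributivity to be the crux: these are not available inside $N/J_2(N)$ a priori, and it is precisely the embedding into $R/J(R)$ supplied by the local homomorphism that installs them.
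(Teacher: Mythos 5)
Your proposal is correct and follows exactly the route the paper sets up: the corollary is stated without proof, immediately after the two lemmas you invoke, and the intended argument is precisely to get $J_2(N)\neq N$ from the last lemma, localness of $N$ from Lemma~\ref{lem:pre_local}, and then upgrade the loop near-field $N/J_2(N)$ of Theorem~\ref{thm:local}(c) to a division ring. Your handling of the only nontrivial step --- showing $\psi^{-1}(J(R))=J_2(N)$ via locality of $\psi$ and the identities $N\setminus U(N)=J_2(N)$, $R\setminus U(R)=J(R)$, so that $N/J_2(N)$ embeds as a unital subring of the division ring $R/J(R)$ in which every nonzero element is invertible --- is exactly what is needed, and you correctly identify that associativity and commutativity of addition and the second distributive law are imported from $R/J(R)$ rather than available internally.
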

\bibliographystyle{plain}
\bibliography{ref}
\end{document}